\newtheorem{theorem}{Theorem}[section]
\newtheorem{lemma}[theorem]{Lemma}
\newtheorem{prop}[theorem]{Proposition}
\newtheorem{cor}[theorem]{Corollary}
\theoremstyle{definition}
\theoremstyle{remark}
\newtheorem{remark}[theorem]{\bf{Remark}}
\numberwithin{equation}{section}
\begin{document}

\title [Improved bounds for the numerical radius]    
{ {  Improved bounds for the numerical radius via polar decomposition of operators   }}

%Refinement of numerical radius inequalities via polar decomposition of operators 

\author[P. Bhunia]{Pintu Bhunia}

\address{Department of Mathematics, Indian Institute of Science, Bengaluru 560012, Karnataka, India}
\email{pintubhunia5206@gmail.com; pintubhunia@iisc.ac.in }

%\thanks will become a 1st page footnote.
%\thanks{We would like to thank the referee for his/her extremely fruitful suggestions. First author  would like to thank UGC, Govt. of India for the financial support in the form of SRF. Prof. Kallol Paul would like to thank RUSA 2.0, Jadavpur University for the partial support.}

\thanks{Dr. Pintu Bhunia would like to sincerely acknowledge Professor Kallol Paul for his valuable comments on this article.
 Dr. Bhunia also would like to thank SERB, Govt. of India for the financial support in the form of National Post Doctoral Fellowship (N-PDF, File No. PDF/2022/000325) under the mentorship of Professor Apoorva Khare}

\thanks{}
%    Information for second author
%    General info

\subjclass[2020]{47A12, 47A30, 15A60}
\keywords {Numerical radius, Operator norm, Polar decomposition, Inequality}

\date{}
\maketitle

\begin{abstract}
Using the polar decomposition of a bounded linear operator $A$ defined on a complex Hilbert space, we obtain several numerical radius inequalities of the operator $A$, which generalize and improve the earlier related ones. Among other bounds, we show that if $w(A)$ is the numerical radius of $A$, then 
\begin{eqnarray*}
	w(A) &\leq&  \frac12 \|A\|^{1/2} \left\| |A|^{t} + |A^*|^{1-t}  \right \|, 
\end{eqnarray*}
	for all $t\in [0,1].$
Also, we obtain some upper bounds for the numerical radius involving the spectral radius and the Aluthge transform of operators. It is shown that 
	\begin{eqnarray*}
	w(A) &\leq&  \|A\|^{1/2} \left( \frac12 \left \| \frac{ |A|+|A^*|}2 \right\|  +\frac12 \left\| \widetilde{A}\right \| \right)^{1/2},
\end{eqnarray*}
where $\widetilde{A}= |A|^{1/2}U|A|^{1/2} $ is the Aluthge transform of $A$ and $A=U|A|$ is the polar decomposition of $A$.
Other related results are also provided.
\end{abstract}

\section{\textbf{Introduction}}
\noindent
Let $\mathcal{B}(\mathcal{H})$ denote the $C^{*}$-algebra of all bounded linear operators on a complex Hilbert space $\mathcal{H}$, with inner product $\langle \cdot, \cdot \rangle$ and the corresponding norm $\|\cdot\|.$  For $A \in \mathcal{B}(\mathcal{H}),$ let $|A|=(A^*A)^{1/2}$ and $|A^*|=(AA^*)^{1/2}$, where $A^*$ is the adjoint of $A.$ For $t\in [0,1],$ the $t$-Aluthge transform (generalized Aluthge transform) of $A\in \mathcal{B}(\mathcal{H})$ is  $\widetilde{A_t}=|A|^{t}U|A|^{1-t},$
where $A=U|A|$ is the polar decomposition of $A$ and $U$ is the partial isometry. In particular, for $t=\frac12$, $\widetilde{A}=\widetilde{A_{\frac12}}=|A|^{1/2}U|A|^{1/2}$ is the Aluthge transform of $A.$

\smallskip
\noindent Let $\|A\|$, $w(A)$ and $r(A)$ denote the operator norm, the numerical radius and the spectral radius of $A$, respectively.
The numerical radius of $A$ is defined as
$$ w(A)=\sup \{|\langle Ax,x \rangle| : x\in \mathcal{H}, \|x\|=1  \}$$
and it is the radius of the smallest disc with center at origin that contains the numerical range. Note that the numerical range $W(A)$ is defined as $W(A)= \{\langle Ax,x \rangle : x\in \mathcal{H}, \|x\|=1  \}.$
It is well known that the numerical radius $w(\cdot):\mathcal{B}(\mathcal{H}) \to \mathbb{R} $ defines a norm on  $\mathcal{B}(\mathcal{H})$ and is equivalent to the operator norm. For  every $A \in  \mathcal{B}(\mathcal{H}),$
\begin{eqnarray}\label{eqv1}
	\frac12 \|A\| \leq  w(A) \leq \|A\|,
\end{eqnarray}
holds.
The spectral radius of $A$ is defined as 
$r(A)=\sup \left\{ |\lambda| : \lambda \in \sigma(A)\right\},$
where $\sigma(A)$ is the spectrum of $A.$ Since, the spectrum $\sigma(A)$  is contained in the closure of the numerical range  (i.e., $\sigma(A) \subset \overline{W(A)}$),  we have 
$$ r(A)\leq w(A).$$
Therefore, for every $A\in \mathcal{B}(\mathcal{H})$, $r(A)\leq w(A)\leq \|A\|$ holds, and  $r(A)=w(A)=\|A\|$, when $A \in \mathcal{B}(\mathcal{H})$ is a normal operator. Also, note that $r(A)=r(\widetilde{A})$, $w(\widetilde{A})\leq w(A)$ and $\|\widetilde{A}\|\leq \|A^2\|^{1/2}\leq \|A\|.$
For more details about the numerical range, the numerical radius and related inequalities, the readers can follow the books \cite{book, book2}.
Various refinements of the numerical radius bounds in \eqref{eqv1} have been studied over the years. Kittaneh in \cite[2003]{Kittaneh_2003} and \cite[2005]{Kittaneh_STD_2005}, respectively, developed the following bounds
\begin{eqnarray}\label{kit03}
	w(A) &\leq& \frac12 \|A\|+ \|A^2\|^{1/2}
\end{eqnarray}
and 
\begin{eqnarray}\label{kit05}
	w^2(A) &\leq&  \frac12 \left\||A|^2+|A^*|^2 \right\|.
\end{eqnarray}
The bounds in \eqref{kit03} and \eqref{kit05} improve the same in \eqref{eqv1}.
 Dragomir in \cite[2008]{D08} proved that
\begin{eqnarray}\label{dra08}
	w^2(A) &\leq& \frac12 \|A\|^2+ \frac12w(A^2).
\end{eqnarray}
Clearly, the bound in \eqref{dra08} improves the same in \eqref{eqv1}. After that, Abu-Omar and Kittaneh in \cite[2015]{Abu_RMJM_2015} developed 
\begin{eqnarray}\label{abu15}
	w^2(A) &\leq&  \frac14 \left\||A|^2+|A^*|^2 \right\|+ \frac12 w(A^2),
\end{eqnarray}
which improves both the bounds in \eqref{kit03},\eqref{kit05} and \eqref{dra08}.
Further, Bhunia and Paul in \cite[2021]{Bhunia_BSM_2021} proved that
\begin{eqnarray}\label{bhu21}
	w^2(A) &\leq&  \frac14 \left\||A|^2+|A^*|^2 \right\|+ \frac12 w(|A||A|^*)
\end{eqnarray}
and 
\begin{eqnarray}\label{bhu_21}
	w(A) &\leq&  \frac1{\sqrt{2}}  w(|A|+i|A|^*).
\end{eqnarray}
The bound in \eqref{bhu21}, is incomparable with the bound in \eqref{abu15}, refines both the bounds in \eqref{kit03} and \eqref{kit05}.
The bound in \eqref{bhu_21} refines the bound \eqref{kit05}.
The same authors in \cite[2021]{Bhunia_AM_2021} obtained an improvement of \eqref{kit03} by using the spectral radius, namely,
\begin{eqnarray}\label{bhu211}
	w(A) &\leq&  \frac12 \|A\|+ \frac12 r^{1/2}(|A||A|^*).
\end{eqnarray}
Further, Bhunia \cite[2023]{Bhu23} obtained an improvement of the second inequality in \eqref{eqv1}, namely,
\begin{eqnarray}\label{bhunia_21}
	w(A) &\leq&  \|A\|^{1/2}   \big \|\alpha |A|+(1-\alpha)|A^*|  \big\|^{1/2}, 
\end{eqnarray}
for all $\alpha\in [0,1]$. Recently, Kittaneh, Moradi and Sababheh \cite[2023]{Kittaneh_LAMA_2023} also
developed the following nice improvement of the second inequality in \eqref{eqv1}:
\begin{eqnarray}\label{kit23}
	w(A) &\leq&  \frac12 \|A\|^{1/2}   \left \| |A|^{1/2}+|A^*|^{1/2}  \right\|.
\end{eqnarray}
Some generalizations of the inequalities in \eqref{kit05}, \eqref{abu15} \eqref{bhu21} and  \eqref{bhu_21} and the other improvements also studied, see \cite{Abu_STU_2013,Bag_MIA_2020, Bhunia_ASM_2022,  Bhunia_LAA_2021,Bhunia_RIM_2021,Bhunia_BSM_2021,Bhunia_LAA_2019, El_STU_2007, Yam_STU_2007}.

\smallskip

In this paper, we obtain various numerical radius inequalities of bounded linear operators, which generalize and improve on the bounds in \eqref{eqv1}, \eqref{kit03}, \eqref{kit05}, \eqref{bhu21}, \eqref{bhu_21}, \eqref{bhu211}, \eqref{bhunia_21} and \eqref{kit23}. Other bounds are also developed which refine the existing ones.

\section{\textbf{Main Results}}

We begin our study with the following known lemmas. First lemma is  known as McCarthy inequality.

 \begin{lemma} \cite{MC}\label{lem1}
 	Let $A\in \mathcal{B}(\mathcal{H})$ be positive, and let  $x\in \mathcal{H}$ with $\|x\|=1$.  Then 
 	$$ \langle Ax,x\rangle^p \leq \langle A^px,x\rangle, $$
 	 for all $p\geq 1.$
 \end{lemma}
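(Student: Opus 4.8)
The plan is to reduce this operator inequality to a scalar convexity statement by means of the spectral theorem. Since $A$ is positive, the spectral theorem provides a spectral measure $E$ supported on $\sigma(A)\subseteq[0,\infty)$ with $A=\int_{\sigma(A)}\lambda\,dE(\lambda)$. For the fixed unit vector $x$, I would introduce the set function $\mu(S)=\langle E(S)x,x\rangle$; because $E$ is a resolution of the identity and $\|x\|=1$, $\mu$ is a Borel probability measure on $\sigma(A)$, its total mass being $\mu(\sigma(A))=\langle x,x\rangle=1$.

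The next step is to translate both sides into integrals against $\mu$. By the functional calculus one has $\langle Ax,x\rangle=\int_{\sigma(A)}\lambda\,d\mu(\lambda)$ and $\langle A^{p}x,x\rangle=\int_{\sigma(A)}\lambda^{p}\,d\mu(\lambda)$, so the claim is exactly
$$\left(\int_{\sigma(A)}\lambda\,d\mu(\lambda)\right)^{p}\leq\int_{\sigma(A)}\lambda^{p}\,d\mu(\lambda).$$
This is Jensen's inequality applied to the function $\phi(t)=t^{p}$, which is convex on $[0,\infty)$ for $p\geq1$ since $\phi''(t)=p(p-1)t^{p-2}\geq0$, together with the probability measure $\mu$. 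Equivalently, one may quote Hölder's inequality with conjugate exponents $p$ and $q=p/(p-1)$: writing $\int\lambda\,d\mu=\int\lambda\cdot1\,d\mu\leq(\int\lambda^{p}\,d\mu)^{1/p}(\int 1\,d\mu)^{1/q}=(\int\lambda^{p}\,d\mu)^{1/p}$ and raising to the $p$-th power yields the same conclusion.

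The only items needing verification are that $\mu$ has total mass one and that $\phi$ is convex on the support of $\mu$, both of which are immediate. I therefore do not anticipate a genuine obstacle: the single conceptual move is recognizing that $S\mapsto\langle E(S)x,x\rangle$ converts the diagonal bilinear form into integration against a probability measure, after which the inequality is nothing more than scalar Jensen (or Hölder). The trivial equality at $p=1$ is covered automatically.
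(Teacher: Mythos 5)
Your proof is correct: the paper does not prove this lemma at all (it is quoted verbatim from McCarthy's paper \cite{MC}), and your argument --- passing to the scalar probability measure $\mu(S)=\langle E(S)x,x\rangle$ via the spectral theorem and then invoking Jensen's inequality (or H\"older) for the convex function $t\mapsto t^{p}$ --- is precisely the standard proof of that cited result. All the steps you flag as needing verification (total mass one, convexity of $t^p$ on $[0,\infty)$, the trivial case $p=1$) are handled correctly, so nothing is missing.
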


Second lemma involves $2\times 2$ positive operator matrix.

\begin{lemma}\cite[Lemma 1]{Kit88}\label{lem2}
	Let $A,B,C\in \mathcal{B}(\mathcal{H})$, where $A$ and $B$ are positive.
	Then the operator matrix $\begin{bmatrix}
		A&C^*\\
		C&B
	\end{bmatrix}\in \mathcal{B}(\mathcal{H}\oplus \mathcal{H})$ is positive if and only if $$|\langle Cx,y\rangle|^2\leq \langle Ax,x\rangle\langle By,y\rangle.$$
\end{lemma}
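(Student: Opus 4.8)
The plan is to expand the quadratic form associated with the block matrix and reduce the positivity condition to a scalar inequality indexed by two Hilbert space vectors. Writing $T=\begin{bmatrix} A&C^*\\ C&B\end{bmatrix}$ and $\xi=\begin{pmatrix} x\\ y\end{pmatrix}\in\mathcal{H}\oplus\mathcal{H}$, I would first compute
\[
\langle T\xi,\xi\rangle=\langle Ax,x\rangle+\langle By,y\rangle+2\operatorname{Re}\langle Cx,y\rangle,
\]
using $\langle C^*y,x\rangle=\overline{\langle Cx,y\rangle}$. Since $A$ and $B$ are positive, the quantities $a:=\langle Ax,x\rangle$ and $b:=\langle By,y\rangle$ are nonnegative for every $x,y$, and $T\ge 0$ is precisely the assertion that the right-hand side above is nonnegative for every choice of $x,y\in\mathcal{H}$.

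For the forward implication (positivity implies the Cauchy--Schwarz-type bound), I would fix $x,y$, set $c:=\langle Cx,y\rangle$, and test positivity on the vector $\begin{pmatrix} x\\ e^{i\theta}ty\end{pmatrix}$ with $t\ge 0$ real and the phase $\theta$ chosen so that $\operatorname{Re}(e^{i\theta}c)=-|c|$. This specializes $\langle T\xi,\xi\rangle\ge 0$ to the scalar inequality $bt^2-2|c|t+a\ge 0$ for all $t\ge 0$. When $b>0$, minimizing this quadratic at $t=|c|/b$ yields exactly $|c|^2\le ab$; the degenerate case $b=0$ must be handled separately, where letting $t\to\infty$ forces $c=0$, so the inequality $0\le 0$ holds trivially. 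This recovers $|\langle Cx,y\rangle|^2\le\langle Ax,x\rangle\langle By,y\rangle$.

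For the reverse implication I would argue directly: assuming $|c|^2\le ab$, the elementary estimate $2\operatorname{Re}(c)\ge -2|c|\ge -2\sqrt{ab}$ together with $a,b\ge 0$ gives, after completing the square,
\[
\langle Ax,x\rangle+\langle By,y\rangle+2\operatorname{Re}\langle Cx,y\rangle\ge a+b-2\sqrt{ab}=\big(\sqrt a-\sqrt b\,\big)^2\ge 0,
\]
so $\langle T\xi,\xi\rangle\ge 0$ for every $\xi$, i.e.\ $T\ge 0$.

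The only genuinely delicate points are the phase-and-scaling optimization in the forward direction — one must simultaneously rotate $y$ to align the cross term with its modulus and rescale to extract the sharp quadratic bound — and the separate treatment of the degenerate case $b=0$, where the quadratic collapses to a linear function. Everything else reduces to the single scalar inequality $bt^2-2|c|t+a\ge 0$ and the arithmetic--geometric mean inequality, so no operator-theoretic machinery beyond the definition of positivity of a block operator matrix is required.
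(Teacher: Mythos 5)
Your proof is correct. Note, however, that the paper does not prove this lemma at all: it is quoted verbatim from \cite[Lemma 1]{Kit88} and used as a black box, so there is no internal argument to compare yours against. Your self-contained quadratic-form argument is the standard elementary proof of this well-known characterization: expand $\langle T\xi,\xi\rangle=\langle Ax,x\rangle+\langle By,y\rangle+2\operatorname{Re}\langle Cx,y\rangle$, optimize over phase and scale in the second component to extract the sharp bound $|\langle Cx,y\rangle|^2\le\langle Ax,x\rangle\langle By,y\rangle$, and run the arithmetic--geometric mean estimate backwards for the converse. (An alternative route found in the literature factors $C=B^{1/2}KA^{1/2}$ with $\|K\|\le 1$, but your argument is more elementary and equally rigorous.) Two cosmetic points: with the test vector $\begin{pmatrix}x\\ e^{i\theta}ty\end{pmatrix}$ the cross term is $2t\operatorname{Re}\left(e^{-i\theta}\langle Cx,y\rangle\right)$ rather than $2t\operatorname{Re}\left(e^{i\theta}\langle Cx,y\rangle\right)$, so the phase must be chosen with the opposite sign --- harmless, since $\theta$ is free; and the inequality in the statement is of course quantified over all $x,y\in\mathcal{H}$, which you use implicitly in both directions. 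Your separate treatment of the degenerate case $\langle By,y\rangle=0$, letting $t\to\infty$ to force $\langle Cx,y\rangle=0$, is precisely the step most often glossed over, and you handle it correctly.
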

 
 Third lemma is named as Buzano's inequality.
 
 \begin{lemma}\cite{Buzano}\label{lem3}
 	Let $x,y,z\in \mathcal{H}$, where  $\|z\|=1$. Then
 	$$ |\langle x,z\rangle\langle z,y\rangle| \leq \frac{\|x\|\|y\|+|\langle x,y\rangle|}{2}.$$
 \end{lemma}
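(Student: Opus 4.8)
The plan is to reduce Buzano's inequality to an elementary two-dimensional estimate by resolving $x$ and $y$ along the unit vector $z$ and its orthogonal complement. First I would write $x = \langle x,z\rangle z + x'$ and $y = \langle y,z\rangle z + y'$, where $x' = x - \langle x,z\rangle z$ and $y' = y - \langle y,z\rangle z$ satisfy $\langle x',z\rangle = \langle y',z\rangle = 0$. Writing $a = \langle x,z\rangle$ and $b = \langle y,z\rangle$, the Pythagorean identity (using $\|z\|=1$) gives $\|x\|^2 = |a|^2 + \|x'\|^2$ and $\|y\|^2 = |b|^2 + \|y'\|^2$, while $\langle x,y\rangle = a\overline{b} + \langle x',y'\rangle$. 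The quantity to be bounded is simply $|\langle x,z\rangle\langle z,y\rangle| = |a\overline{b}| = |a|\,|b|$, so the whole problem is recast in terms of the four nonnegative scalars $|a|$, $|b|$, $\|x'\|$, $\|y'\|$.

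Next I would bound the inner product $\langle x,y\rangle$ from below using the reverse triangle inequality together with the Cauchy--Schwarz inequality in $\mathcal{H}$: from $\langle x,y\rangle = a\overline{b} + \langle x',y'\rangle$ one gets $|\langle x,y\rangle| \geq |a\overline{b}| - |\langle x',y'\rangle| \geq |a|\,|b| - \|x'\|\,\|y'\|$. Inserting this into the right-hand side of the asserted inequality reduces the claim to showing
\[
  \|x\|\,\|y\| - \|x'\|\,\|y'\| \;\geq\; |a|\,|b|.
\]

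I expect this last inequality to be the only substantive point, and it should follow from the Cauchy--Schwarz inequality in $\mathbb{R}^2$ applied to the vectors $(|a|,\|x'\|)$ and $(|b|,\|y'\|)$. Indeed,
\[
  \|x\|\,\|y\| = \sqrt{(|a|^2+\|x'\|^2)(|b|^2+\|y'\|^2)} \;\geq\; |a|\,|b| + \|x'\|\,\|y'\|,
\]
which rearranges exactly to the required estimate. Combining the three steps then yields $|a|\,|b| \leq \tfrac12\big(\|x\|\,\|y\| + |\langle x,y\rangle|\big)$, which is Buzano's inequality. As a consistency check, setting $y = x$ collapses the bound to $|\langle x,z\rangle|^2 \leq \|x\|^2$, so the inequality indeed refines Cauchy--Schwarz, as one would hope.
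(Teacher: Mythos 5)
Your proof is correct. Note first that the paper itself offers no proof of this lemma: it is stated as a quoted result from Buzano's original article, so there is no internal argument to compare against, and your write-up supplies a complete, elementary substitute. Each step checks out: the orthogonal decomposition $x = az + x'$, $y = bz + y'$ (with $a=\langle x,z\rangle$, $b=\langle y,z\rangle$) gives $\|x\|^2 = |a|^2 + \|x'\|^2$, $\|y\|^2 = |b|^2 + \|y'\|^2$ and $\langle x,y\rangle = a\overline{b} + \langle x',y'\rangle$; the reverse triangle inequality plus Cauchy--Schwarz gives $|\langle x,y\rangle| \ge |a|\,|b| - \|x'\|\,\|y'\|$; and the two-dimensional Cauchy--Schwarz inequality (equivalently, AM--GM after squaring) gives $\|x\|\,\|y\| \ge |a|\,|b| + \|x'\|\,\|y'\|$. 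Adding the last two inequalities yields exactly $2|a|\,|b| \le \|x\|\,\|y\| + |\langle x,y\rangle|$, which is the claim. For context, your route is a scalar-level unpacking of the standard short proof: if $P$ denotes the orthogonal projection onto the span of $z$, then $\langle x,z\rangle\langle z,y\rangle = \langle Px,y\rangle$ and $2P-I$ is a self-adjoint unitary, so
\begin{equation*}
2|\langle Px,y\rangle| \le |\langle (2P-I)x,y\rangle| + |\langle x,y\rangle| \le \|x\|\,\|y\| + |\langle x,y\rangle|.
\end{equation*}
Your version trades this operator-theoretic observation for two applications of Cauchy--Schwarz, which is arguably more elementary, while the projection proof is shorter and makes transparent where the factor $\tfrac12$ comes from (averaging over the unitaries $I$ and $2P-I$). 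Either way, the lemma is established.
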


  By using the above lemmas we first prove the following proposition.
   %(Though the proof is easy, for reader convenience here we sketch a proof.)

\begin{prop}\label{prop1}
	Let $A,B,C\in \mathcal{B}(\mathcal{H})$, where $A$ and $B$ are positive.
	If $\begin{bmatrix}
		A&C^*\\
		C&B
	\end{bmatrix}\in \mathcal{B}(\mathcal{H}\oplus \mathcal{H})$ is positive, then the following bounds hold: \\
(i) $w^2(C) \leq \frac12 \left \| A^2+B^2\right\|.$\\
(ii)  $w^2(C) \leq \frac12 \left \| A \right\| \left\| B\right\|+ \frac12 w(AB).$\\
(iii)  $w^2(C) \leq \frac14 \left \| A^2+B^2\right\|+ \frac12 w(AB).$\\
(iv)  $w^2(C) \leq \left \| \alpha A+ (1-\alpha)B\right\| \|A\|^{1-\alpha}\|B\|^{\alpha}$, for all $\alpha\in [0,1] $.

\end{prop}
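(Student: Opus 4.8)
The plan is to derive all four bounds from a single fundamental inequality obtained by specializing Lemma \ref{lem2}. Since the operator matrix $\begin{bmatrix} A & C^*\\ C & B\end{bmatrix}$ is positive, Lemma \ref{lem2} gives $|\langle Cx,y\rangle|^2\le \langle Ax,x\rangle\langle By,y\rangle$ for all $x,y\in\mathcal{H}$. Taking $y=x$ with $\|x\|=1$ yields the master inequality
$$ |\langle Cx,x\rangle|^2 \le \langle Ax,x\rangle\,\langle Bx,x\rangle, $$
and each part amounts to bounding the scalar product $\langle Ax,x\rangle\langle Bx,x\rangle$ in a different way and then taking the supremum over unit vectors. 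For (i) I would apply the scalar inequality $ab\le\frac12(a^2+b^2)$ to $a=\langle Ax,x\rangle$, $b=\langle Bx,x\rangle$, followed by McCarthy's inequality (Lemma \ref{lem1}) with $p=2$, i.e. $\langle Ax,x\rangle^2\le\langle A^2x,x\rangle$ and likewise for $B$. This gives $|\langle Cx,x\rangle|^2\le\frac12\langle(A^2+B^2)x,x\rangle\le\frac12\|A^2+B^2\|$, and taking the supremum proves (i).

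For (ii) and (iii) the decisive tool is Buzano's inequality (Lemma \ref{lem3}) with $z=x$. Since $B$ is positive, $\langle x,Bx\rangle=\langle Bx,x\rangle$ is real, so applying Lemma \ref{lem3} to the vectors $Ax,Bx$ gives
$$ \langle Ax,x\rangle\langle Bx,x\rangle \le \tfrac12\big(\|Ax\|\,\|Bx\| + |\langle Ax,Bx\rangle|\big). $$
Here I would rewrite the cross term as $\langle Ax,Bx\rangle=\langle BAx,x\rangle$, so that $|\langle Ax,Bx\rangle|\le w(BA)=w(AB)$, using that $BA=(AB)^*$ (as $A,B$ are self-adjoint) and that the numerical radius is adjoint-invariant. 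For (ii), bounding $\|Ax\|\|Bx\|\le\|A\|\|B\|$ and taking the supremum yields $w^2(C)\le\frac12\|A\|\|B\|+\frac12 w(AB)$. For (iii), I keep $\|Ax\|\|Bx\|\le\frac12(\|Ax\|^2+\|Bx\|^2)=\frac12\langle(A^2+B^2)x,x\rangle\le\frac12\|A^2+B^2\|$, which sharpens the first term to $\frac14\|A^2+B^2\|$.

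For (iv) I would invoke the weighted AM--GM inequality. Writing again $a=\langle Ax,x\rangle$ and $b=\langle Bx,x\rangle$, I split $ab=(a^{\alpha}b^{1-\alpha})(a^{1-\alpha}b^{\alpha})$. For the first factor, weighted AM--GM gives $a^{\alpha}b^{1-\alpha}\le\alpha a+(1-\alpha)b=\langle(\alpha A+(1-\alpha)B)x,x\rangle\le\|\alpha A+(1-\alpha)B\|$; for the second factor, $a^{1-\alpha}b^{\alpha}\le\|A\|^{1-\alpha}\|B\|^{\alpha}$ since $a\le\|A\|$ and $b\le\|B\|$. Multiplying these two estimates and taking the supremum over unit vectors gives (iv).

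The scalar inequalities and the passage to the supremum are routine; the one point requiring genuine care is the use of Buzano's inequality in (ii) and (iii), specifically the adjoint and self-adjointness bookkeeping that converts the cross term $\langle Ax,Bx\rangle$ into the numerical-radius quantity $w(AB)$. I expect that conversion to be the main (though modest) obstacle, since everything else is a direct application of the master inequality together with elementary estimates.
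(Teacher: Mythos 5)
Your proposal is correct and follows essentially the same route as the paper's own proof: the same master inequality from Lemma \ref{lem2} with $y=x$, the scalar AM--QM bound plus McCarthy's inequality (Lemma \ref{lem1}) for (i), Buzano's inequality (Lemma \ref{lem3}) with $z=x$ for (ii) and (iii), and the weighted AM--GM splitting $ab=(a^{\alpha}b^{1-\alpha})(a^{1-\alpha}b^{\alpha})$ for (iv). The only difference is cosmetic: you make explicit the adjoint bookkeeping $\langle Ax,Bx\rangle=\langle BAx,x\rangle$ and $w(BA)=w(AB)$, which the paper passes over silently.
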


\begin{proof}
	Take $x\in \mathcal{H}$ with $\|x\|=1.$ \\
	(i) From Lemma \ref{lem2}, we have
	\begin{eqnarray*}
		|\langle Cx,x\rangle|^2 &\leq& \langle Ax,x\rangle\langle Bx,x\rangle\\
		&\leq& \frac12 \left(\langle Ax,x\rangle^2+ \langle Bx,x\rangle^2\right)\\
		&\leq& \frac12 \left(\langle A^2x,x\rangle+ \langle B^2x,x\rangle\right) \text{ (by Lemma \ref{lem1})}\\
		&\leq& \frac12 \left\| A^2+B^2\right\|.
	\end{eqnarray*}
	This implies, $w^2(C)\leq \frac12 \left\| A^2+B^2\right\|.$\\
	(ii) From Lemma \ref{lem2}, we have
	\begin{eqnarray*}
		|\langle Cx,x\rangle|^2 &\leq& \langle Ax,x\rangle\langle x,Bx\rangle\\
		&\leq& \frac12 \left( \|Ax\|\|Bx\|+ |\langle Ax,Bx\rangle|\right)  \text{ (by Lemma \ref{lem3})}\\
		&\leq& \frac12 \left(\|A\|\|B\|+ w(AB)\right).
	\end{eqnarray*}
	This gives, $w^2(C) \leq  \frac12 \|A\|\|B\|+ \frac12 w(AB).$\\
	(iii) From Lemma \ref{lem2}, we have
	\begin{eqnarray*}
		|\langle Cx,x\rangle|^2 &\leq& \langle Ax,x\rangle\langle x,Bx\rangle\\
		&\leq& \frac12 \left( \|Ax\|\|Bx\|+ |\langle Ax,Bx\rangle|\right)  \text{ (by Lemma \ref{lem3})}\\
		&\leq& \frac12 \left( \frac{\|Ax\|^2+\|Bx\|^2}2+ |\langle Ax,Bx\rangle|\right)\\
		&\leq& \frac14 \left \| A^2+B^2\right\|+ \frac12 w(AB).
	\end{eqnarray*}
This implies, $w^2(C) \leq \frac14 \left \| A^2+B^2\right\|+ \frac12 w(AB).$\\
(iv) From Lemma \ref{lem2}, we have
\begin{eqnarray*}
	|\langle Cx,x\rangle|^2 &\leq& \langle Ax,x\rangle\langle Bx,x\rangle\\
	&=& \langle Ax,x\rangle^{\alpha}\langle Bx,x\rangle^{1-\alpha}\langle Ax,x\rangle^{1-\alpha}\langle Bx,x\rangle^{\alpha} \\
	&\leq& \left( \alpha \langle Ax,x\rangle +(1-\alpha)\langle Bx,x\rangle \right) \langle Ax,x\rangle^{1-\alpha}\langle Bx,x\rangle^{\alpha} \\
	&\leq&  \left \| \alpha A+ (1-\alpha)B\right\| \|A\|^{1-\alpha}\|B\|^{\alpha}.
\end{eqnarray*}
This implies, $w^2(C) \leq \left \| \alpha A+ (1-\alpha)B\right\| \|A\|^{1-\alpha}\|B\|^{\alpha},$ as desired.
\end{proof}

By applying Proposition \ref{prop1}, we prove the following lemma.

\begin{lemma}\label{lem4}
	Let $B,C\in \mathcal{B}(\mathcal{H})$. Then $w(BC)$ satisfies the following bounds:\\
	(i) $w^2(BC) \leq \frac12 \left\||B^*|^4+|C|^4 \right\|.$\\
	(ii) $w^2(BC) \leq \frac12 \|B\|^2\|C\|^2+ \frac12 w\left(B(CB)^*C\right)$.\\
	(iii)  $w^2(BC) \leq \frac14 \left\| |B^*|^4+ |C|^4\right\|+ \frac12 w\left(B(CB)^*C\right)$.\\
	(iv)  $w^2(BC) \leq  \left\| \alpha |B^*|^2+ (1-\alpha)|C|^2\right\| \|B\|^{2(1-\alpha)}\|C\|^{2\alpha},$ for all $\alpha\in [0,1].$
\end{lemma}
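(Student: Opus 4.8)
The plan is to manufacture a single positive $2\times 2$ operator matrix whose lower-left corner is $(BC)^*$ and whose diagonal blocks are the positive operators $|B^*|^2 = BB^*$ and $|C|^2 = C^*C$, and then read off all four inequalities from the corresponding four parts of Proposition \ref{prop1}. The natural candidate is
$$ M = \begin{bmatrix} BB^* & BC \\ (BC)^* & C^*C \end{bmatrix}, $$
which I would exhibit as positive simply by factoring it as $M = \begin{bmatrix} B \\ C^* \end{bmatrix}\begin{bmatrix} B^* & C \end{bmatrix}$, a product of the form $XX^*$; the only verification needed is the entrywise identity $C^*B^* = (BC)^*$ in the off-diagonal corners. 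Exhibiting $M$ in this factored form is the whole creative content of the argument, and it sidesteps any appeal to Lemma \ref{lem2}.

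With $M \geq 0$ in hand, I would invoke Proposition \ref{prop1}, taking its ``$A$'' to be $BB^* = |B^*|^2$, its ``$B$'' to be $C^*C = |C|^2$, and its off-diagonal operator ``$C$'' to be the lower-left entry $(BC)^*$. Since the numerical radius is invariant under passing to the adjoint, $w\big((BC)^*\big) = w(BC)$, so every bound the Proposition produces for $w\big((BC)^*\big)$ is at the same time a bound for $w(BC)$.

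It then remains only to translate the four conclusions into the stated forms, which is bookkeeping. For the norm terms I would use $\|BB^*\| = \|B\|^2$ and $\|C^*C\| = \|C\|^2$, together with $(BB^*)^2 = |B^*|^4$ and $(C^*C)^2 = |C|^4$; these give part (i) outright and supply the leading norm factors in parts (ii)--(iv). The one simplification worth recording is that the product ``$AB$'' $= |B^*|^2|C|^2 = BB^*C^*C$ appearing in parts (ii) and (iii) of the Proposition equals $B(CB)^*C$, because $(CB)^* = B^*C^*$; this is exactly what produces the factor $w\big(B(CB)^*C\big)$. For part (iv) I would substitute $\|A\|^{1-\alpha} = \|B\|^{2(1-\alpha)}$ and $\|B\|^{\alpha} = \|C\|^{2\alpha}$ (in the Proposition's notation) to arrive precisely at $\big\|\alpha|B^*|^2 + (1-\alpha)|C|^2\big\|\,\|B\|^{2(1-\alpha)}\|C\|^{2\alpha}$.

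I do not anticipate a genuine obstacle: the argument reduces to choosing the right positive matrix and matching notation. The single point at which I would stay alert is the ordering in the cross term. Had I instead placed $BC$ itself (rather than its adjoint) in the lower-left corner, the Proposition would return $w(C^*C\,BB^*) = w\big(|C|^2|B^*|^2\big)$, and I would have to apply $w(X)=w(X^*)$ to the self-adjoint factors to recover $w\big(B(CB)^*C\big)$. Arranging $M$ so that $(BC)^*$ occupies the lower-left corner avoids even this minor detour and lets all four parts drop out in a single application of Proposition \ref{prop1}.
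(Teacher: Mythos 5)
Your proof is correct and follows essentially the same route as the paper: both arguments use the positive operator matrix $\begin{bmatrix} BB^* & BC \\ C^*B^* & C^*C \end{bmatrix}$ and then read off all four bounds by applying Proposition \ref{prop1} to its lower-left corner $(BC)^*$, together with $w\big((BC)^*\big)=w(BC)$ and the simplifications $\|BB^*\|=\|B\|^2$, $BB^*C^*C=B(CB)^*C$. The only (cosmetic) difference is that the paper certifies positivity of this matrix via the criterion of Lemma \ref{lem2}, whereas you exhibit it directly as $XX^*$ with $X=\begin{bmatrix} B \\ C^* \end{bmatrix}$, which is an equally valid and arguably cleaner verification of the same fact.
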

\begin{proof}
	Following Lemma \ref{lem2}, it is easy to observe that the operator  matrix $\begin{bmatrix}
	BB^*	& BC \\
	C^*B^*	& C^*C
	\end{bmatrix}\in \mathcal{B}(\mathcal{H} \oplus \mathcal{H})$ is positive. Using this positive operator matrix in Proposition \ref{prop1}, we obtain the desired upper bounds of $w(BC)$.
\end{proof}

Now, we are in a position to obtain our first aim result.

\begin{theorem}\label{th1}
	If $A\in \mathcal{B}(\mathcal{H})$, then
	\begin{eqnarray*}
		w^2(A) \leq  \frac{1}{2} \left\| A   \right \|^2 +\frac12 w\left(|A|^{2t}|A^*|^{2(1-t)} \right),
	\end{eqnarray*}
 for all  $t\in [0,1]$.
 In particular, for $t=\frac12$
\begin{eqnarray}\label{par1}
	w^2(A) \leq  \frac{1}{2} \left\| A   \right \|^2 +\frac12 w\left(|A||A^*|\right).
\end{eqnarray}
\end{theorem}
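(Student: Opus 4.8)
The plan is to exploit the polar decomposition $A = U|A|$ to write $A$ as a product of two operators and then invoke Lemma \ref{lem4}(ii). Concretely, I would fix $t \in [0,1]$ and set
\[
B = U|A|^{1-t}, \qquad C = |A|^{t},
\]
so that $BC = U|A|^{1-t}|A|^{t} = U|A| = A$. Applying Lemma \ref{lem4}(ii) to this factorization gives
\[
w^2(A) = w^2(BC) \leq \tfrac12\|B\|^2\|C\|^2 + \tfrac12\, w\big(B(CB)^*C\big),
\]
and the whole proof then reduces to identifying the two terms on the right with $\tfrac12\|A\|^2$ and $\tfrac12\, w\big(|A|^{2t}|A^*|^{2(1-t)}\big)$, respectively.

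For the norm term, since $\||A|\| = \|A\|$ and $\|P^s\| = \|P\|^s$ for a positive operator $P$, one has $\|C\|^2 = \||A|^t\|^2 = \|A\|^{2t}$. For $\|B\|^2 = \|B^*B\|$ I would use that $U$ is a partial isometry whose initial projection $U^*U$ fixes the range of $|A|^{1-t}$, so that $B^*B = |A|^{1-t}U^*U|A|^{1-t} = |A|^{2(1-t)}$ and hence $\|B\|^2 = \|A\|^{2(1-t)}$. Multiplying yields $\|B\|^2\|C\|^2 = \|A\|^{2}$, as required.

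The more substantial computation is the second term. Expanding, $B(CB)^*C = U|A|^{1-t}\big(|A|^t U |A|^{1-t}\big)^* |A|^{t} = U|A|^{2(1-t)}U^*|A|^{2t}$. The key identity I would then establish is $U|A|^{2(1-t)}U^* = (AA^*)^{1-t} = |A^*|^{2(1-t)}$; this follows from $AA^* = U|A|^2U^*$ together with the fact that $(U|A|^2U^*)^s = U|A|^{2s}U^*$ for $s \geq 0$ (proved first for integer powers using $U^*U|A|^2 = |A|^2$ and then extended by the continuous functional calculus). Thus $B(CB)^*C = |A^*|^{2(1-t)}|A|^{2t}$, and since the numerical radius is invariant under taking adjoints,
\[
w\big(|A^*|^{2(1-t)}|A|^{2t}\big) = w\big((|A^*|^{2(1-t)}|A|^{2t})^*\big) = w\big(|A|^{2t}|A^*|^{2(1-t)}\big),
\]
which completes the argument and, for $t = \tfrac12$, specializes to \eqref{par1}.

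The main obstacle I anticipate is the careful handling of the partial-isometry identities $U^*U|A|^{1-t} = |A|^{1-t}$ and $(U|A|^2U^*)^{1-t} = U|A|^{2(1-t)}U^*$, especially at the endpoints $t = 0$ and $t = 1$ where $|A|^{0}$ and $|A^*|^0$ must be read as the projections onto $\overline{\operatorname{ran}|A|}$ and $\overline{\operatorname{ran}|A^*|}$ rather than the identity; away from the endpoints, the relation $\overline{\operatorname{ran}|A|^{s}} = \overline{\operatorname{ran}|A|}$ for $s > 0$ makes these simplifications routine.
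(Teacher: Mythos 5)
Your proposal is correct and follows essentially the same route as the paper: the same factorization $B=U|A|^{1-t}$, $C=|A|^{t}$ fed into Lemma \ref{lem4}(ii), with $B(CB)^*C$ identified as $|A^*|^{2(1-t)}|A|^{2t}$ (the paper phrases this via the $t$-Aluthge transform $\widetilde{A_t}=CB$) and the order flipped using $w(X)=w(X^*)$. Your write-up is in fact more careful than the paper's, which dismisses the identity $U|A|^{2(1-t)}U^*=|A^*|^{2(1-t)}$ and the endpoint conventions as ``easy to see.''
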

\begin{proof}
Let $A=U|A|$ be the polar decomposition of $A$. By taking $B=U|A|^{1-t}$ and $C=|A|^t$ in (ii) of Lemma \ref{lem4}, we obtain  
\begin{eqnarray}\label{p1}
	w^2(A) \leq  \frac{1}{2} \left\| A   \right \|^2 +\frac12 w\left(U |A|^{1-t} \widetilde{A_t}^*|A|^{t} \right),
\end{eqnarray}
where $ \widetilde{A_t}=|A|^tU|A|^{1-t}$ is the $t$-Aluthge transform of $A.$ Now, it is easy to see that $U |A|^{1-t} \widetilde{A_t}^*|A|^{t} = |A^*|^{2(1-t)} |A|^{2t}.$ This completes the proof.
\end{proof}

\begin{remark}\label{rem3} 
	(i) Since $w(|A||A^*|)\leq \|A^2\|,$  
	\begin{eqnarray*}
		w^2(A)&\leq& \frac{1}{2} \left\| A   \right \|^2 +\frac12 w\left(|A||A^*|\right)\\
		&\leq& \frac{1}{2} \left\| A   \right \|^2 +\frac12 \|A^2\|\\
		&\leq& \|A\|^2.
	\end{eqnarray*}
 Therefore, the bound in \eqref{par1} refines the second bound in \eqref{eqv1}.\\
	(ii) It follows from \eqref{p1} that
	\begin{eqnarray}\label{p2}
		w^2(A) \leq  \frac{1}{2} \left\| A   \right \|^2 +\frac12 \left\| |A|^{t} \widetilde{A_t}|A|^{1-t} \right\|, 
	\end{eqnarray}
for all $t\in [0,1]$.
In particular, for $t=\frac12$
\begin{eqnarray}\label{p3}
	w^2(A) \leq  \frac{1}{2} \left\| A   \right \|^2 +\frac12 \left\| |A|^{1/2} \widetilde{A}|A|^{1/2} \right\|,
\end{eqnarray}
where $\widetilde{A}= \widetilde{A_{\frac12}}=|A|^{1/2}U|A|^{1/2}$ is the Aluthge transform of $A.$
\end{remark}

Next theorem reads as:

\begin{theorem}\label{th2}
	If $A\in \mathcal{B}(\mathcal{H})$, then
	\begin{eqnarray*}
		w^2(A) \leq  \frac{1}{4} \left\| |A|^{4t}+ |A^*|^{4(1-t)}   \right \| +\frac12 w\left(|A|^{2t}|A^*|^{2(1-t)} \right), 
	\end{eqnarray*}
	for all $t\in [0,1]$.
	In particular, for $t=\frac12$
	\begin{eqnarray}\label{par2}
		w^2(A) \leq  \frac{1}{4} \left\| |A|^{2}+ |A^*|^{2}   \right \| +\frac12 w\left(|A||A^*|\right).
	\end{eqnarray}
\end{theorem}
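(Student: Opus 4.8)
The plan is to run the argument of Theorem \ref{th1} almost verbatim, but invoking part (iii) of Lemma \ref{lem4} in place of part (ii). I would write the polar decomposition $A=U|A|$ and set $B=U|A|^{1-t}$ and $C=|A|^t$, so that $BC=U|A|^{1-t}|A|^{t}=U|A|=A$; hence $w^2(BC)=w^2(A)$ is precisely the quantity to be estimated. Feeding this choice of $B,C$ into Lemma \ref{lem4}(iii) gives
\begin{eqnarray*}
 w^2(A)\leq \frac14\left\||B^*|^4+|C|^4\right\|+\frac12 w\left(B(CB)^*C\right),
\end{eqnarray*}
and it remains only to identify the three operators $|B^*|^4$, $|C|^4$ and $B(CB)^*C$ in terms of $|A|$ and $|A^*|$.

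Two of these are routine. Since $C=|A|^t$ is positive, $|C|^4=(C^*C)^2=|A|^{4t}$. For the cross term, $CB=|A|^{t}U|A|^{1-t}=\widetilde{A_t}$, so $(CB)^*=|A|^{1-t}U^*|A|^{t}$ and
\begin{eqnarray*}
 B(CB)^*C=U|A|^{1-t}\,|A|^{1-t}U^*|A|^{t}\,|A|^{t}=U|A|^{2(1-t)}U^*|A|^{2t}.
\end{eqnarray*}
Using that the numerical radius is adjoint-invariant and that both factors are positive (hence self-adjoint), this yields $w(B(CB)^*C)=w(|A^*|^{2(1-t)}|A|^{2t})=w(|A|^{2t}|A^*|^{2(1-t)})$, exactly as in the proof of Theorem \ref{th1}.

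The only genuinely new step, and the one I expect to carry the technical weight, is computing $|B^*|^4=(BB^*)^2$. Here $BB^*=U|A|^{2(1-t)}U^*$, and I would appeal to the identity $U|A|^{2s}U^*=|A^*|^{2s}$, valid for all $s\geq 0$. This rests on the standard fact that the initial space of the partial isometry $U$ is $\overline{\operatorname{ran}|A|}$, so that $U^*U$ acts as the identity on the range of $|A|^2$; consequently $(U|A|^2U^*)^n=U|A|^{2n}U^*$ for positive integers $n$ and, by continuity of the functional calculus, for arbitrary real exponents. Taking $s=2(1-t)$ gives $|B^*|^4=U|A|^{4(1-t)}U^*=|A^*|^{4(1-t)}$, so the first term becomes $\frac14\left\||A|^{4t}+|A^*|^{4(1-t)}\right\|$ and the asserted general inequality follows. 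The special case \eqref{par2} is then immediate upon substituting $t=\tfrac12$.
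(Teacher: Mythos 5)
Your proof is correct and is essentially identical to the paper's: the paper also takes $B=U|A|^{1-t}$, $C=|A|^t$ in Lemma \ref{lem4}(iii) and reuses the identifications from Theorem \ref{th1}. The details you supply (in particular $U|A|^{2s}U^*=|A^*|^{2s}$, giving $|B^*|^4=|A^*|^{4(1-t)}$ and $B(CB)^*C=|A^*|^{2(1-t)}|A|^{2t}$) are precisely the steps the paper compresses into ``using similar arguments as in Theorem \ref{th1}.''
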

\begin{proof}
	Let $A=U|A|$ be the polar decomposition of $A$. By taking $B=U|A|^{1-t}$ and $C=|A|^t$ in (iii) of Lemma \ref{lem4} and using similar arguments as in Theorem \ref{th1}, we obtain the desired results.
\end{proof}

\begin{remark}\label{rem4}
	(i) The bound in \eqref{par2} was also developed in \cite[Theorem 2.5]{Bhunia_BSM_2021} using different technique. \\
	%Note that the bound \eqref{par2} is sharper than the same in \eqref{par1}.\\
	(ii) Using  similar arguments as \eqref{p2} and \eqref{p3}, we can obtain 
	\begin{eqnarray}\label{p4}
		w^2(A) \leq  \frac{1}{4} \left\| |A|^{4t}+ |A^*|^{4(1-t)}   \right \| +\frac12 \left\| |A|^{t} \widetilde{A_t}|A|^{1-t} \right\|, 
	\end{eqnarray}
	for all $t\in [0,1]$.
	In particular, for $t=\frac12$
	\begin{eqnarray}\label{p5}
		w^2(A) \leq  \frac{1}{4} \left\| |A|^{2}+ |A^*|^{2}   \right \|  +\frac12 \left\| |A|^{1/2} \widetilde{A}|A|^{1/2} \right\|.
	\end{eqnarray}
\end{remark}

Next result reads as follows:

\begin{theorem}\label{th3}
	If $A\in \mathcal{B}(\mathcal{H})$, then
	\begin{eqnarray*}
		w(A) \leq  \left\| \alpha |A|^{2(1-t)}+ (1-\alpha)|A^*|^{2t}   \right \|^{1/2} \|A\|^{(1-\alpha)(1-t)+\alpha t}, 
	\end{eqnarray*}
\text{ for all $\alpha,t\in [0,1]$.}
In particular, for $t=\frac12$
\begin{eqnarray}\label{par3}
	w(A) \leq   \left\| \alpha |A|^{}+ (1-\alpha)|A^*|^{}   \right \|^{1/2} \|A\|^{1/2}, 
\end{eqnarray}
\text{ for all $\alpha\in [0,1]$.}
Also, in particular, for $\alpha=\frac12$
\begin{eqnarray}\label{par3_1}
	w(A) \leq   \left\|  \frac{|A|^{2t}+ |A^*|^{2(1-t)}}{2}   \right \|^{1/2} \|A\|^{1/2},
\end{eqnarray}
 \text{ for all $t\in [0,1]$.}
\end{theorem}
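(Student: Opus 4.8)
The plan is to derive the inequality from part (iv) of Lemma~\ref{lem4} together with the polar decomposition $A=U|A|$, exactly in the spirit of the proofs of Theorems~\ref{th1} and \ref{th2}. Writing $A=U|A|$, I would set $B=U|A|^{t}$ and $C=|A|^{1-t}$, so that $BC=U|A|^{t}|A|^{1-t}=U|A|=A$ and hence $w(BC)=w(A)$. The advantage of this factorization is that the two ``diagonal blocks'' produced by Lemma~\ref{lem4} become clean functions of $|A|$ and $|A^*|$.

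The key computations are the standard polar-decomposition identities $BB^*=U|A|^{2t}U^*=|A^*|^{2t}$ and $C^*C=|A|^{2(1-t)}$, where the first uses $U|A|^{s}U^*=|A^*|^{s}$ (the same functional-calculus fact already invoked in the proof of Theorem~\ref{th1}). These give $|B^*|^2=|A^*|^{2t}$ and $|C|^2=|A|^{2(1-t)}$. For the operator norms one has $\|B\|^2=\|BB^*\|=\left\||A^*|^{2t}\right\|=\|A\|^{2t}$ and similarly $\|C\|^2=\|A\|^{2(1-t)}$, so $\|B\|=\|A\|^{t}$ and $\|C\|=\|A\|^{1-t}$. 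Substituting these into Lemma~\ref{lem4}(iv), applied with the parameter $1-\alpha$ in place of $\alpha$ (permissible since the bound holds for every value in $[0,1]$), I obtain
\[
w^2(A) \leq \left\| \alpha|A|^{2(1-t)}+(1-\alpha)|A^*|^{2t} \right\| \, \|A\|^{2[(1-\alpha)(1-t)+\alpha t]},
\]
after simplifying the exponent via $t\alpha+(1-t)(1-\alpha)=(1-\alpha)(1-t)+\alpha t$. Taking square roots yields the stated inequality, and the displayed special cases \eqref{par3} and \eqref{par3_1} follow by putting $t=\tfrac12$ and $\alpha=\tfrac12$, respectively.

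The routine part is the bookkeeping with exponents; the point that needs care is matching the precise form of the statement. Because Lemma~\ref{lem4}(iv) pairs the weight $\alpha$ with $|B^*|^2$ and the weight $1-\alpha$ with $|C|^2$, the naive substitution $B=U|A|^{1-t}$, $C=|A|^{t}$ (the one directly mirroring Theorem~\ref{th1}) produces $\alpha|A^*|^{2(1-t)}+(1-\alpha)|A|^{2t}$, which is the ``swapped'' version of the target. I would therefore either use the factorization $B=U|A|^{t}$, $C=|A|^{1-t}$ above and relabel $\alpha\mapsto1-\alpha$, or equivalently replace $t\mapsto1-t$; verifying that one of these relabelings reproduces both the weighted-norm term and the exponent $(1-\alpha)(1-t)+\alpha t$ verbatim is the only genuinely delicate step. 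The main (and essentially only) obstacle is thus the correct choice of substitution so that the final expression matches the asserted one exactly, rather than up to an interchange of the roles of $\alpha$ and $t$.
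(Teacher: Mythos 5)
Your proposal is correct and is essentially the paper's own proof: the paper likewise deduces the theorem from part (iv) of Lemma \ref{lem4} applied to a polar-decomposition factorization of $A$, taking $B=U|A|^{1-t}$ and $C=|A|^{t}$. Your variant ($B=U|A|^{t}$, $C=|A|^{1-t}$, with $\alpha\mapsto 1-\alpha$) differs only by the harmless relabeling $(t,\alpha)\mapsto(1-t,1-\alpha)$, and in fact it reproduces the stated inequality verbatim, whereas the paper's literal substitution yields the swapped form $\left\|\alpha|A^*|^{2(1-t)}+(1-\alpha)|A|^{2t}\right\|^{1/2}\|A\|^{(1-\alpha)(1-t)+\alpha t}$ and leaves that relabeling implicit -- the very point you correctly flagged as the only delicate step.
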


\begin{proof}
	Let $A=U|A|$ be the polar decomposition of $A$. The proof follows from  (iv) of Lemma \ref{lem4} by taking $B=U|A|^{1-t}$ and $C=|A|^t$.
\end{proof}

Note that, the bound in \eqref{par3} was also obtained in \cite[Theorem 2.8]{Bhu23} using different technique. 
For our next result we need the following lemma.

\begin{lemma}\label{lem5}\cite[Theorem 2.5]{Bhunia_LAMA_2022}
	Let $B,C\in \mathcal{B}(\mathcal{H})$ be such that $|B|C=C^*|B|$. If $f,g: [0,\infty) \to [0,\infty)$  are continuous functions with $f(\lambda)g(\lambda)=\lambda$, for all $\lambda\geq 0,$ then
	\begin{eqnarray*}
		w^p(BC) &\leq&  r^p(C) w\left( \begin{bmatrix}
			0& f^{2p}(|B|)\\
			g^{2p}(|B^*|) & 0
		\end{bmatrix}\right)=\frac12  r^p(C) \left\|f^{2p}(|B|)+g^{2p}(|B^*|)\right\|,
	\end{eqnarray*}
 for all $p\geq 1.$
\end{lemma}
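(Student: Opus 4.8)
The claimed equality is the standard formula for the numerical radius of an off-diagonal positive operator matrix: for positive $P,Q$ one has $w\left(\begin{bmatrix}0&P\\ Q&0\end{bmatrix}\right)=\frac12\|P+Q\|$, where "$\ge$" comes from testing the vectors $\frac{1}{\sqrt2}(x,x)$ (which give $\frac12\langle(P+Q)x,x\rangle$) and "$\le$" comes from Lemma \ref{lem2} together with the arithmetic--geometric mean inequality. So the real content is the inequality $w^p(BC)\le r^p(C)\cdot\frac12\|f^{2p}(|B|)+g^{2p}(|B^*|)\|$, and the plan is first to decode the hypothesis. Writing the polar decomposition $B=V|B|$, the relation $|B|C=C^*|B|$ says exactly that $|B|C$ is self-adjoint; equivalently, $C$ is similar to $T_0:=|B|^{1/2}C|B|^{-1/2}$, and the condition $T_0=T_0^*$ is precisely $|B|C=C^*|B|$. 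Hence $T_0$ is self-adjoint, $r(C)=r(T_0)=\|T_0\|$, and $BC=V|B|C=V|B|^{1/2}T_0|B|^{1/2}$.

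Next I would extract the core estimate from this factorization. For a unit vector $x$, using $V|B|V^*=|B^*|$, so that $\||B|^{1/2}V^*x\|^2=\langle|B^*|x,x\rangle$,
\[
|\langle BCx,x\rangle|=|\langle T_0|B|^{1/2}x,\,|B|^{1/2}V^*x\rangle|\le\|T_0\|\,\||B|^{1/2}x\|\,\||B|^{1/2}V^*x\|=r(C)\,\langle|B|x,x\rangle^{1/2}\langle|B^*|x,x\rangle^{1/2}.
\]
Raising this to the power $2p$ and applying McCarthy's inequality (Lemma \ref{lem1}) to replace $\langle|B|x,x\rangle^{p}$ and $\langle|B^*|x,x\rangle^{p}$ by $\langle|B|^{p}x,x\rangle$ and $\langle|B^*|^{p}x,x\rangle$, then using the arithmetic--geometric mean inequality and taking the supremum over $x$, yields the sharp balanced bound $w^{p}(BC)\le r^{p}(C)\cdot\frac12\big\||B|^{p}+|B^*|^{p}\big\|$, which is the case $f=g=\sqrt{\cdot\,}$ of the statement.

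Finally, the general case would follow by dominating the balanced right-hand side: it suffices to show $\big\||B|^{p}+|B^*|^{p}\big\|\le\big\|f^{2p}(|B|)+g^{2p}(|B^*|)\big\|$ for every admissible pair $f,g$. Pointwise this is the elementary inequality $f^{2p}(\lambda)+g^{2p}(\lambda)\ge2\lambda^{p}$ (arithmetic--geometric mean, using $f^{2p}g^{2p}=\mathrm{id}^{2p}$), which already forces the balanced choice to be optimal when $|B|$ and $|B^*|$ commute. I expect the main obstacle to lie exactly here, together with the reduction in the first step. Because $|B|$ and $|B^*|$ need not commute, upgrading the pointwise comparison to the stated operator-norm inequality is delicate; and because the conjugation by $|B|^{1/2}$ is only literally available when $|B|$ is invertible, extracting the spectral radius $r(C)$ (rather than the cruder $\|C\|$) in full generality requires an approximation argument, for instance replacing $|B|$ by $|B|+\varepsilon$ and letting $\varepsilon\to0$. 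It is precisely in this passage to a self-adjoint operator via similarity that the hypothesis $|B|C=C^*|B|$ is indispensable.
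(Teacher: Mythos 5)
The paper itself offers no proof of this lemma to compare against: it is quoted verbatim from \cite[Theorem 2.5]{Bhunia_LAMA_2022}, so your argument has to stand on its own. It does not, and the fatal gap is exactly where you suspected, but it is worse than ``delicate'': your plan to deduce the general case from the balanced case $f=g=\sqrt{\cdot}$ via the norm inequality $\bigl\||B|^{p}+|B^*|^{p}\bigr\|\le\bigl\|f^{2p}(|B|)+g^{2p}(|B^*|)\bigr\|$ cannot work, because that inequality is false. Take $p=1$ and let $B$ be the weighted cyclic shift on $\mathbb{C}^3$ with $Be_1=e_2$, $Be_2=\tfrac12e_3$, $Be_3=2e_1$, so that $|B|=\mathrm{diag}\bigl(1,\tfrac12,2\bigr)$ and $|B^*|=\mathrm{diag}\bigl(2,1,\tfrac12\bigr)$ (note these even commute). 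Choose $f^2$ continuous with $f^2\equiv\tfrac12$ on $[0,1]$, linear on $[1,2]$, and $f^2(\lambda)=\lambda$ for $\lambda\ge2$, and set $g(\lambda)=\lambda/f(\lambda)$; then $f,g$ are admissible, and
\begin{eqnarray*}
f^{2}(|B|)+g^{2}(|B^*|)=\mathrm{diag}\Bigl(\tfrac12+2,\ \tfrac12+2,\ 2+\tfrac12\Bigr)=\tfrac52 I,
\qquad
|B|+|B^*|=\mathrm{diag}\Bigl(3,\ \tfrac32,\ \tfrac52\Bigr),
\end{eqnarray*}
so $\||B|+|B^*|\|=3>\tfrac52=\|f^{2}(|B|)+g^{2}(|B^*|)\|$. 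This also refutes your claim that pointwise AM--GM ``forces the balanced choice to be optimal when $|B|$ and $|B^*|$ commute'': AM--GM pairs $f$ and $g$ at the \emph{same} spectral value, whereas in $f^{2p}(|B|)+g^{2p}(|B^*|)$ they are evaluated at $|B|$ and at $|B^*|=V|B|V^*$, whose spectral values are matched by the partial isometry (here, cyclically shifted), not diagonally. In this example the unbalanced bound of the lemma is \emph{strictly stronger} than your balanced one, which is precisely why the paper needs the general $f,g$ version (it is applied with $f(\lambda)=\lambda^{\alpha}$, $\alpha\ne\tfrac12$, in Corollary \ref{cor1}, and the remark after it exhibits $\alpha_0=0.87$ beating $\alpha=\tfrac12$). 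So no reduction from the balanced case is possible; a correct proof must carry $f,g$ through the Cauchy--Schwarz step from the start.

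There is a second, smaller gap: even your balanced case is only proved for invertible $|B|$, and the repair you suggest fails, since replacing $|B|$ by $|B|+\varepsilon$ destroys the hypothesis --- $(|B|+\varepsilon)C=C^*(|B|+\varepsilon)$ together with $|B|C=C^*|B|$ forces $\varepsilon C=\varepsilon C^*$, i.e.\ $C$ self-adjoint. (Nor can one simply compress to $\overline{\mathrm{ran}}\,|B|$: that subspace is $C^*$-invariant but need not reduce $B$ or $C$, and on it $|B|$ is merely injective.) What is correct and worth keeping from your note: the identification of the hypothesis with self-adjointness of $|B|C$, the similarity $T_0=|B|^{1/2}C|B|^{-1/2}$ giving $r(C)=\|T_0\|$, the identity $V|B|V^*=|B^*|$, and the McCarthy/AM--GM chain; together these do establish $w^{p}(BC)\le\frac12\,r^{p}(C)\,\||B|^{p}+|B^*|^{p}\|$ for invertible $|B|$, which is a genuine but strictly weaker special case of the statement.
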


Using the above lemma we obtain the following bound.

\begin{theorem}\label{th4}
	Let $f,g: [0,\infty) \to [0,\infty)$  be continuous functions with $f(\lambda)g(\lambda)=\lambda$, for all $\lambda\geq 0.$
	If $A\in \mathcal{B}(\mathcal{H})$, then
	\begin{eqnarray*}
		w^p(A) \leq  \frac12 \|T\|^{pt} \left\| f^{2p}\left(|A|^{1-t}\right) + g^{2p}\left(|A^*|^{1-t}\right)  \right \|,
	\end{eqnarray*}
for all $p\geq 1,$ and for all $t\in [0,1]$. In particular, for $p=1$
\begin{eqnarray}\label{par4}
	w(A) \leq  \frac12 \|A\|^{t} \left\| f^{2}\left(|A|^{1-t}\right) + g^{2}\left(|A^*|^{1-t}\right)  \right \|,
\end{eqnarray}
for all $t\in [0,1]$.
\end{theorem}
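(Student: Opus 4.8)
The plan is to apply Lemma \ref{lem5} directly, choosing $B$ and $C$ through the polar decomposition exactly as in the proofs of Theorems \ref{th1}--\ref{th3}. (I read the $\|T\|$ in the statement as a typo for $\|A\|$.) Writing $A=U|A|$, I would set $B=U|A|^{1-t}$ and $C=|A|^{t}$, so that $BC=U|A|^{1-t}|A|^{t}=U|A|=A$; the whole point is then to identify the three quantities $|B|$, $|B^*|$ and $r(C)$ that appear on the right-hand side of Lemma \ref{lem5} and to check its commutation hypothesis.

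The verification rests on a few standard facts about the partial isometry $U$. Since $U^*U$ is the orthogonal projection onto $\overline{\operatorname{ran}|A|}$, one has $U^*U|A|^{1-t}=|A|^{1-t}$, whence $|B|=(|A|^{1-t}U^*U|A|^{1-t})^{1/2}=|A|^{1-t}$. Because $C=|A|^{t}$ is positive, $C^*=C$, and the two powers of $|A|$ commute, the hypothesis of Lemma \ref{lem5} holds: $|B|C=|A|^{1-t}|A|^{t}=|A|=|A|^{t}|A|^{1-t}=C^*|B|$. For the co-isometric side I would use $|A^*|=U|A|U^*$ (which follows from $(U|A|U^*)^2=U|A|^2U^*=AA^*$ together with positivity of $U|A|U^*$); then $|B^*|=(BB^*)^{1/2}=(U|A|^{2(1-t)}U^*)^{1/2}=U|A|^{1-t}U^*=|A^*|^{1-t}$ by the continuous functional calculus applied to $h(\lambda)=\lambda^{1-t}$ with $h(0)=0$. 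Finally $r(C)=\||A|^{t}\|=\|A\|^{t}$, since $C$ is positive and $\||A|\|=\|A\|$.

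Substituting $|B|=|A|^{1-t}$, $|B^*|=|A^*|^{1-t}$ and $r^{p}(C)=\|A\|^{pt}$ into Lemma \ref{lem5} yields
\[
w^{p}(A)=w^{p}(BC)\leq \tfrac12\,\|A\|^{pt}\bigl\|f^{2p}\bigl(|A|^{1-t}\bigr)+g^{2p}\bigl(|A^*|^{1-t}\bigr)\bigr\|,
\]
which is the asserted bound, and $p=1$ specializes to \eqref{par4}. The main obstacle is not the final substitution but the justification of the operator identities $|B|=|A|^{1-t}$ and especially $|B^*|=|A^*|^{1-t}$: these require care with the initial and final projections $U^*U$ and $UU^*$ of the partial isometry and a clean appeal to the functional calculus on $U|A|U^*$. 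Once those identities are secured, the rest is immediate.
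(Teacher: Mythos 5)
Your proof is correct and follows essentially the same route as the paper's: both apply Lemma \ref{lem5} with $B=U|A|^{1-t}$, $C=|A|^{t}$ from the polar decomposition and use $r(|A|^{t})=\|A\|^{t}$ (and yes, $\|T\|$ is a typo for $\|A\|$). The only difference is that you explicitly verify the identities $|B|=|A|^{1-t}$, $|B^*|=|A^*|^{1-t}$ and the commutation hypothesis $|B|C=C^*|B|$, which the paper leaves implicit.
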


\begin{proof}
Let $A=U|A|$ be the polar decomposition of $A$.  By taking $B=U|A|^{1-t}$ and $C=|A|^t$ in Lemma \ref{lem5}, we get
\begin{eqnarray*}
	w^p(A) &\leq & \frac12 r^p(|A|^t) \left\| f^{2p}\left(|A|^{1-t}\right) + g^{2p}\left(|A^*|^{1-t}\right)  \right \|.
\end{eqnarray*}
Since $r(|A|^t)=\||A|^t\|=\|A\|^t$, we obtain the desired results.
\end{proof}

Considering $f(\lambda)=\lambda^{\alpha}$ and $  g(\lambda)=\lambda^{1-\alpha}$, $0\leq \alpha \leq 1$, in \eqref{par4} we obtain the following corollary.

\begin{cor}\label{cor1}
	If $A\in \mathcal{B}(\mathcal{H}),$ then
	\begin{eqnarray*}
		w(A) \leq  \frac12 \|A\|^{t} \left\| |A|^{2\alpha(1-t)} + |A^*|^{2(1-\alpha)(1-t)}  \right \|,
	\end{eqnarray*}
	for all $\alpha,t\in [0,1]$. In particular, for $t=\frac12$
	\begin{eqnarray}\label{par5}
		w(A) \leq  \frac12 \|A\|^{1/2} \left\| |A|^{\alpha} + |A^*|^{1-\alpha}  \right \|,
	\end{eqnarray}
for all $\alpha\in [0,1]$.
\end{cor}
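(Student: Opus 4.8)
The plan is to derive this corollary as a direct specialization of Theorem \ref{th4}, specifically from the $p=1$ instance recorded in \eqref{par4}. The only genuine input I need is a valid choice of the pair $(f,g)$ satisfying the hypothesis $f(\lambda)g(\lambda)=\lambda$ for all $\lambda\geq 0$; once such a pair is fixed, the rest of the statement is forced by the conclusion of Theorem \ref{th4}.

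First I would set $f(\lambda)=\lambda^{\alpha}$ and $g(\lambda)=\lambda^{1-\alpha}$ for a fixed $\alpha\in[0,1]$, and verify that these are admissible. Both are continuous maps $[0,\infty)\to[0,\infty)$ because the exponents $\alpha$ and $1-\alpha$ are nonnegative, and their product is $f(\lambda)g(\lambda)=\lambda^{\alpha}\lambda^{1-\alpha}=\lambda$, which is exactly the normalization required in Theorem \ref{th4}. Hence \eqref{par4} applies verbatim with this choice.

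Next I would evaluate the two operator terms on the right-hand side of \eqref{par4} by functional calculus. Here $f^{2}$ corresponds to the scalar function $\lambda\mapsto(f(\lambda))^{2}=\lambda^{2\alpha}$, so applying it to the positive operator $|A|^{1-t}$ and using the composition rule for the functional calculus of a positive operator gives $f^{2}\big(|A|^{1-t}\big)=\big(|A|^{1-t}\big)^{2\alpha}=|A|^{2\alpha(1-t)}$. Symmetrically, $g^{2}$ is $\lambda\mapsto\lambda^{2(1-\alpha)}$, whence $g^{2}\big(|A^*|^{1-t}\big)=|A^*|^{2(1-\alpha)(1-t)}$. Substituting these into \eqref{par4} yields the general bound $w(A)\leq\tfrac12\|A\|^{t}\big\||A|^{2\alpha(1-t)}+|A^*|^{2(1-\alpha)(1-t)}\big\|$.

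Finally I would specialize $t=\tfrac12$: the exponents collapse via $2\alpha(1-\tfrac12)=\alpha$ and $2(1-\alpha)(1-\tfrac12)=1-\alpha$, and the scalar factor reduces to $\|A\|^{1/2}$, producing \eqref{par5}. There is no real obstacle at this stage, since the substantive work is carried by Theorem \ref{th4} and the corollary is a routine substitution. The only point that deserves a moment's care is confirming that the nested functional calculus composes as $\big(|A|^{1-t}\big)^{2\alpha}=|A|^{2\alpha(1-t)}$; this is immediate from the spectral theorem applied to the positive operator $|A|$, and the same reasoning handles the $|A^*|$ term.
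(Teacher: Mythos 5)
Your proposal is correct and is exactly the paper's own argument: the paper derives Corollary \ref{cor1} by substituting $f(\lambda)=\lambda^{\alpha}$ and $g(\lambda)=\lambda^{1-\alpha}$ into \eqref{par4}, just as you do, with the functional-calculus identity $\bigl(|A|^{1-t}\bigr)^{2\alpha}=|A|^{2\alpha(1-t)}$ handling the exponents. Your write-up is simply a more detailed verification of the same substitution.
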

 
 \begin{remark}
 (i) Let $A\in \mathcal{B}(\mathcal{H}).$	
 In particular, considering $\alpha=\frac12$ in \eqref{par5} we obtain  the following bound
 	\begin{eqnarray}\label{lama}
 		w(A) \leq  \frac12 \|A\|^{1/2} \left\| |A|^{1/2} + |A^*|^{1/2}  \right \|,
 	\end{eqnarray}
 	which was recently proved by Kittaneh et al. \cite{Kittaneh_LAMA_2023}. \\
 	(ii) It follows from the bound in \eqref{par5} that 
 	\begin{eqnarray}\label{lamai}
 		w(A)\leq \frac12 \|A\|^{1/2} \min_{\alpha\in [0,1]}\left\| |A|^{\alpha} + |A^*|^{1-\alpha}  \right \|,
 	\end{eqnarray}
 	for every \text{$A\in \mathcal{B}(\mathcal{H})$}. Clearly, the bound in \eqref{lamai} is sharper than that of the bound in \eqref{lama}.
 	Considering $A=\begin{bmatrix}
 		0&2&0\\
 		0&0&3\\
 		0&0&0
 	\end{bmatrix} \oplus \begin{bmatrix}
 		1
 	\end{bmatrix}$ (defined on $\mathbb{C}^3\oplus \mathbb{C}$), we have, $\left\| |A|^{\alpha} + |A^*|^{1-\alpha}  \right \|=\max \left\{2^{1-\alpha}, 2^{\alpha}+3^{1-\alpha}, 3^{\alpha}, 2   \right\}.$
 	Clearly, $$\left\| |A|^{1/2} + |A^*|^{1/2}  \right \|=\sqrt{2}+\sqrt{3}\approxeq 3.14626436994 $$ and $$\left\| |A|^{\alpha_0} + |A^*|^{1-\alpha_0}  \right \|=2^{\alpha_0}+3^{1-\alpha_0}\approxeq 2.98118458519, \text{ where $\alpha_0=\frac{87}{100}$}.$$
 	Hence, 
 	$  \min_{\alpha\in [0,1]}\left\| |A|^{\alpha} + |A^*|^{1-\alpha}  \right \|<  \left\| |A|^{1/2} + |A^*|^{1/2}  \right \| .$
 	\smallskip
 	This implies that the bound in \eqref{lamai} is a non-trivial refinement of the bound in \eqref{lama}. 
\end{remark}
 Next, we need the following lemma.
 \begin{lemma}\label{lem6}\cite[Corollary 2.7]{Bhunia_LAMA_2022}
 	Let $B,C\in
 	 \mathcal{B}(\mathcal{H})$ be such that $|B|C=C^*|B|$. Then
 	$$ w(BC) \leq \frac14 \left(\left\| |B|^2+|B^*|^2 \right\| + 2\|B^2\|   \right)^{1/2}  \left(\left\| |C|^2+|C^*|^2 \right\| + 2\|C^2\|   \right)^{1/2}.$$
 \end{lemma}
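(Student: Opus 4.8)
The plan is to deduce the bound from Lemma \ref{lem5}, which is built precisely for pairs satisfying the compatibility relation $|B|C=C^*|B|$, and then to re-symmetrise the resulting (a priori asymmetric) estimate. Concretely, I would apply Lemma \ref{lem5} with $p=1$ and the elementary pair $f(\lambda)=g(\lambda)=\sqrt{\lambda}$, which satisfies $f(\lambda)g(\lambda)=\lambda$; since then $f^{2}(|B|)=|B|$ and $g^{2}(|B^*|)=|B^*|$, Lemma \ref{lem5} gives
\begin{eqnarray*}
w(BC) \leq \frac12\, r(C)\,\big\| |B|+|B^*| \big\|.
\end{eqnarray*}
It then remains to dominate the right-hand side by $\frac14\,\Phi(B)^{1/2}\Phi(C)^{1/2}$, where I abbreviate $\Phi(X)=\big\||X|^2+|X^*|^2\big\|+2\|X^2\|$. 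The idea is to estimate the scalar factor $r(C)$ and the operator-norm factor $\||B|+|B^*|\|$ separately, so that their product reassembles into the desired geometric mean.

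For the spectral-radius factor I would use that $\sigma(C)\subset\overline{W(C)}$, whence $r(C)\leq w(C)$, together with the Abu-Omar--Kittaneh inequality \eqref{abu15}. Since $w(C^2)\leq\|C^2\|$, inequality \eqref{abu15} yields $w^{2}(C)\leq \frac14\Phi(C)$, that is $w(C)\leq\frac12\Phi(C)^{1/2}$, and therefore $r(C)\leq \frac12\,\Phi(C)^{1/2}$.

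The factor involving $B$ is the heart of the matter: I claim $\big\||B|+|B^*|\big\|\leq\Phi(B)^{1/2}$. Since $|B|+|B^*|$ is positive, $\||B|+|B^*|\|^{2}=\|(|B|+|B^*|)^{2}\|$, and expanding the square together with the triangle inequality gives
\begin{eqnarray*}
\big\||B|+|B^*|\big\|^{2} \leq \big\||B|^{2}+|B^*|^{2}\big\| + \big\||B||B^*|+|B^*||B|\big\|.
\end{eqnarray*}
The crux, which I expect to be the main obstacle to state cleanly, is the norm identity $\||B||B^*|\|=\|B^2\|$: using the polar decomposition $B=U|B|$ one has $\||B||B^*|\|^{2}=\big\||B|\,BB^*\,|B|\big\|=\||B|B\|^{2}$, and then $\||B|B\|=\|U|B|\,U|B|\|=\|B^2\|$ because $U$ acts isometrically on $\overline{\mathrm{ran}}\,|B|$. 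Granting this, the second summand is at most $2\|B^2\|$ by the triangle inequality (note $|B^*||B|$ is the adjoint of $|B||B^*|$), so the right-hand side above is at most $\Phi(B)$, proving the claim.

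Finally, multiplying the two estimates gives $w(BC)\leq \frac12\cdot\frac12\Phi(C)^{1/2}\cdot\Phi(B)^{1/2}=\frac14\Phi(B)^{1/2}\Phi(C)^{1/2}$, which is exactly the asserted inequality. The only genuinely non-routine ingredient is the identity $\||B||B^*|\|=\|B^2\|$; everything else is a straightforward assembly of Lemma \ref{lem5}, inequality \eqref{abu15}, and the bound $r(C)\leq w(C)$.
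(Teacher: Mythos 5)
Your proof is correct, and it is worth noting at the outset that this paper contains no proof to compare against: Lemma \ref{lem6} is imported verbatim from \cite[Corollary 2.7]{Bhunia_LAMA_2022}. Your derivation reconstructs it from ingredients the paper does make available, and every step checks out. Applying Lemma \ref{lem5} with $p=1$ and $f(\lambda)=g(\lambda)=\lambda^{1/2}$ legitimately gives $w(BC)\le\frac12\,r(C)\,\bigl\||B|+|B^*|\bigr\|$. The factor $r(C)$ is handled exactly as you say: $r(C)\le w(C)$, and \eqref{abu15} together with $w(C^2)\le\|C^2\|$ yields $w(C)\le\frac12\bigl(\||C|^2+|C^*|^2\|+2\|C^2\|\bigr)^{1/2}$. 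The crux, the identity $\||B||B^*|\|=\|B^2\|$, is also correctly established: $\||B||B^*|\|^2=\bigl\||B|BB^*|B|\bigr\|=\bigl\|(|B|B)(|B|B)^*\bigr\|=\||B|B\|^2$, and $\||B|B\|=\|B^2\|$ because $U$ is isometric on its initial space $\overline{\mathrm{ran}}\,|B|$, which contains the range of $|B|U|B|$; since $|B^*||B|=(|B||B^*|)^*$, the cross terms satisfy $\||B||B^*|+|B^*||B|\|\le 2\|B^2\|$, giving $\||B|+|B^*|\|^2\le\||B|^2+|B^*|^2\|+2\|B^2\|$. Multiplying the two estimates produces exactly the asserted symmetric bound. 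This is almost certainly the route of the original reference as well, where the result appears as a corollary of the statement quoted here as Lemma \ref{lem5}; so your blind reconstruction is both valid and, in all likelihood, faithful to the source.
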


Now, we obtain an upper bound of $w(A)$ using the Aluthge transform of $A.$

%and let $A=U|A|$ be the polar decomposition of $A$. If $\widetilde{A}= |A|^{1/2}U|A|^{1/2} $ is the Aluthge transform of $A$
 
 \begin{theorem}\label{th5}
 	If $A\in \mathcal{B}(\mathcal{H})$, then
 	\begin{eqnarray*}
 		w(A) &\leq&  \|A\|^{1/2} \left( \frac12 \left \| \frac{ |A|+|A^*|}2 \right\|  +\frac12 \left\| \widetilde{A}\right \| \right)^{1/2},
 	\end{eqnarray*}
 where $\widetilde{A}= |A|^{1/2}U|A|^{1/2} $ is the Aluthge transform of $A$ and $A=U|A|$ is the polar decomposition of $A$.
 \end{theorem}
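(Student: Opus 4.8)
The plan is to apply Lemma \ref{lem6} to a factorization of $A$ coming from its polar decomposition. Writing $A=U|A|$, I would set $B=U|A|^{1/2}$ and $C=|A|^{1/2}$, so that $BC=U|A|^{1/2}|A|^{1/2}=U|A|=A$ and hence $w(BC)=w(A)$. Before invoking the lemma I must verify its standing hypothesis $|B|C=C^*|B|$. Since $C=|A|^{1/2}$ is positive, $C^*=C$, and the identity reduces to checking $|B|=|A|^{1/2}$; this follows from $B^*B=|A|^{1/2}U^*U|A|^{1/2}=|A|$, using that $U^*U$ is the orthogonal projection onto $\overline{\mathrm{ran}\,|A|}$ and therefore fixes $|A|^{1/2}$. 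With $|B|=|A|^{1/2}=C=C^*$, both sides of the hypothesis equal $|A|$.

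Next I would evaluate the four quantities entering Lemma \ref{lem6}. For $C=|A|^{1/2}$ everything is immediate: $|C|^2=|C^*|^2=|A|$ and $C^2=|A|$, so $\left\||C|^2+|C^*|^2\right\|=2\|A\|$ and $\|C^2\|=\|A\|$, whence the second factor of the lemma becomes $(2\|A\|+2\|A\|)^{1/2}=2\|A\|^{1/2}$. For $B=U|A|^{1/2}$ I already have $|B|^2=|A|$, and I would compute $BB^*=U|A|^{1/2}|A|^{1/2}U^*=U|A|U^*=|A^*|$, using the standard polar-decomposition identity $|A^*|=U|A|U^*$; thus $|B^*|^2=|A^*|$ and $\left\||B|^2+|B^*|^2\right\|=\left\||A|+|A^*|\right\|$.

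The only step requiring real care is the term $\|B^2\|$, and this is where the Aluthge transform enters. Here $B^2=U|A|^{1/2}\,U|A|^{1/2}=U\bigl(|A|^{1/2}U|A|^{1/2}\bigr)=U\widetilde{A}$. Since the range of $\widetilde{A}$ lies in $\mathrm{ran}\,|A|^{1/2}\subseteq\overline{\mathrm{ran}\,|A|}$, the partial isometry $U$ acts isometrically on it, so $\|B^2\|=\|U\widetilde{A}\|=\|\widetilde{A}\|$. Identifying $\|B^2\|$ with $\|\widetilde{A}\|$ (rather than merely bounding it by $\|A^2\|^{1/2}$) is the main point to get right, since it is exactly what produces the Aluthge transform in the final bound.

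Assembling these pieces in Lemma \ref{lem6} would give
\begin{eqnarray*}
w(A)=w(BC) &\leq& \frac14 \left( \left\| |A|+|A^*| \right\| + 2\left\|\widetilde{A}\right\| \right)^{1/2}\cdot 2\|A\|^{1/2}\\
&=& \|A\|^{1/2}\left( \frac14\left\| |A|+|A^*| \right\| + \frac12\left\|\widetilde{A}\right\| \right)^{1/2}.
\end{eqnarray*}
Finally, rewriting $\frac14\left\||A|+|A^*|\right\|=\frac12\left\|\frac{|A|+|A^*|}{2}\right\|$ inside the square root recovers the stated inequality.
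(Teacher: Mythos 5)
Your proof is correct and takes exactly the paper's approach: the paper's own proof likewise applies Lemma \ref{lem6} with $B=U|A|^{1/2}$ and $C=|A|^{1/2}$. Your write-up additionally supplies the verification details the paper leaves implicit (the hypothesis $|B|C=C^*|B|$, the identities $|B|^2=|A|$, $|B^*|^2=|A^*|$, and $\|B^2\|=\|\widetilde{A}\|$ via $B^2=U\widetilde{A}$), all of which check out.
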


\begin{proof}
	 By taking $B=U|A|^{1/2}$ and $C=|A|^{1/2}$ in Lemma \ref{lem6}, we obtain the desired first inequality. The second inequality follows from the first inequality.
\end{proof}

\begin{remark}
(i) Clearly, 
$$ \|A\|^{1/2} \left( \frac12 \left \| \frac{ |A|+|A^*|}2 \right\|  +\frac12 \left\| \widetilde{A}\right \| \right)^{1/2} \leq \|A\|^{1/2} \left( \frac12  \|  A \|  + \frac12 \left\| \widetilde{A} \right\| \right)^{1/2}.$$
 Since  $\| \widetilde{A} \|\leq \|A^2\|^{1/2}$,  
\begin{eqnarray*}
	\|A\|^{1/2} \left( \frac12  \|  A \|  + \frac12 \| \widetilde{A} \| \right)^{1/2}
	\leq  \|A\|^{1/2} \left( \frac12 \|A\|  +\frac12  \| {A^2} \|^{1/2} \right)^{1/2} \leq \|A\|.
\end{eqnarray*}
Therefore, the bound obtained in Theorem \ref{th5} refines the second bound in \eqref{eqv1}.\\
(ii) Following \cite[Corollary 2]{Kittaneh_JFA_1997}, we have $\left\|  |A|+|A^*| \right\|\leq \|A\|+ \left\||A|^{1/2}|A^*|^{1/2}\right\|.$
Also, it is easy to observe that $\left\||A|^{1/2}|A^*|^{1/2}\right\|=r^{1/2}\left(|A||A^*|\right).$ Therefore, from Theorem \ref{th5}, we derive that
	\begin{eqnarray*}
	w(A) &\leq&  \|A\|^{1/2} \left( \frac12 \left \| \frac{ |A|+|A^*|}2 \right\|  +\frac12 \left\| \widetilde{A}\right \| \right)^{1/2}\\
	&\leq&  \|A\|^{1/2} \left( \frac12 \left(\frac12  \|  A \|+ \frac12 r^{1/2}\left(|A||A^*|\right) \right) + \frac12 \| \widetilde{A} \| \right)^{1/2}\\
	&\leq&  \|A\|^{1/2} \left( \frac12\left(\frac12  \|  A \|+ \frac12 w^{1/2}\left(|A||A^*|\right)\right)  + \frac12 \| \widetilde{A} \| \right)^{1/2}\\
	&\leq&  \|A\|^{1/2} \left( \frac12  \left(\frac12 \|  A \|+ \frac12 \left\|A^2\right\|^{1/2}\right)  + \frac12 \| \widetilde{A} \| \right)^{1/2}.
\end{eqnarray*}
\label{rem2}\end{remark}

 Based on the inequalities in (ii) of Remark \ref{rem2} and the first inequality in \eqref{eqv1}, we obtain the following proposition.
 
 \begin{prop}\label{prop2}
 	Let $A\in \mathcal{B}(\mathcal{H})$. If $A^2=0$, then 
 	 $$w(A)=\frac12 {\|A\|} \, \text{ and } \,
 	 \left \| { |A|+|A^*|} \right\|=\|A\|.$$
 	%(c) $|A|^{1/2}|A^*|^{1/2}=0.$
 \end{prop}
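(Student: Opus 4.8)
The plan is to run the four-line chain of inequalities displayed in part (ii) of Remark \ref{rem2} ``backwards'': I will use the hypothesis $A^2=0$ to collapse its right-hand side down to $\frac12\|A\|$, combine this with the elementary lower bound in \eqref{eqv1}, and then read off both equalities from the fact that the two ends of the resulting chain agree.

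First I would dispose of the trivial case $A=0$, in which both assertions read $0=0$; so assume $A\neq 0$. Since $A^2=0$ we have $\|A^2\|=0$, and hence $\|A^2\|^{1/2}=0$. Moreover, the inequality $\|\widetilde{A}\|\leq \|A^2\|^{1/2}$ recorded in the introduction forces $\widetilde{A}=0$, so that $\|\widetilde{A}\|=0$ as well.

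Next I would substitute $\|\widetilde{A}\|=0$ and $\|A^2\|^{1/2}=0$ into the chain of Remark \ref{rem2}(ii). Its final term then becomes $\|A\|^{1/2}\left(\tfrac12\cdot\tfrac12\|A\|\right)^{1/2}=\frac12\|A\|$, so the entire chain yields $w(A)\leq \frac12\|A\|$. Since the first inequality in \eqref{eqv1} supplies $\frac12\|A\|\leq w(A)$, we conclude $w(A)=\frac12\|A\|$, which is the first assertion.

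Finally, because the two extreme quantities of the (now two-sided) chain coincide, every intermediate inequality must in fact be an equality. In particular the first upper bound of Remark \ref{rem2}(ii), with its $\|\widetilde{A}\|$ term already vanishing, must equal $\frac12\|A\|$, i.e. $\|A\|^{1/2}\left(\tfrac12\left\|\tfrac{|A|+|A^*|}{2}\right\|\right)^{1/2}=\frac12\|A\|$. Squaring and cancelling the nonzero factor $\|A\|$ gives $\left\|\tfrac{|A|+|A^*|}{2}\right\|=\frac12\|A\|$, that is $\||A|+|A^*|\|=\|A\|$. I do not expect a genuine obstacle here: all of the analytic content has already been established in Remark \ref{rem2}(ii), and the only points demanding a little care are the separate treatment of $A=0$ and the deduction $\|\widetilde{A}\|=0$ from $A^2=0$.
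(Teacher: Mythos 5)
Your proof is correct and is essentially the paper's own argument: the paper derives Proposition \ref{prop2} precisely by specializing the chain of inequalities in Remark \ref{rem2}(ii) under $A^2=0$ (using $\|\widetilde{A}\|\leq\|A^2\|^{1/2}=0$) and combining the resulting bound $w(A)\leq\frac12\|A\|$ with the lower bound in \eqref{eqv1}, with the norm equality $\left\||A|+|A^*|\right\|=\|A\|$ read off from the forced equality throughout the chain. Your write-up merely makes explicit the details (the $A=0$ case and the squaring step) that the paper leaves to the reader.
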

 %(i.e., Proposition \ref{prop2})
 The converse of the above proposition may not hold. For example, considering $A=\begin{bmatrix}
 	0&2\\
 	0&0
\end{bmatrix} \oplus \begin{bmatrix}
 1
\end{bmatrix}$ (defined on $\mathbb{C}^2\oplus \mathbb{C}$), we see that $w(A)=1=\frac12\|A\|$ and $ \left \| { |A|+|A^*|} \right\|=2=\|A\|$, but $A^2\neq0.$
%{\color{red} Does the converse of Proposition \ref{prop2} hold?}
\smallskip
To develop our next result we need the following lemma.
\begin{lemma}\label{lem7}\cite[Corollary 2.13]{Bhunia_AM_2021}
	Let $B,C\in \mathcal{B}(\mathcal{H})$ be such that $|B|C=C^*|B|$. Then
	$$ w(BC) \leq \frac12r(C)\left(\|B\|+ r^{1/2}(|B||B^*|)\right) .$$
\end{lemma}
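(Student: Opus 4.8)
The plan is to obtain this bound as essentially a one-line specialization of Lemma~\ref{lem5}, whose hypothesis $|B|C=C^*|B|$ is exactly the hypothesis imposed here, followed by the Kittaneh norm estimate already recorded in Remark~\ref{rem2}. First I would apply Lemma~\ref{lem5} with $p=1$ and with the particular continuous functions $f(\lambda)=g(\lambda)=\lambda^{1/2}$ on $[0,\infty)$; these are admissible since $f(\lambda)g(\lambda)=\lambda$ for all $\lambda\ge 0$. Because $f^{2}(|B|)=|B|$ and $g^{2}(|B^*|)=|B^*|$, the conclusion of Lemma~\ref{lem5} collapses to
\[
w(BC)\;\le\;\tfrac12\,r(C)\,\big\|\,|B|+|B^*|\,\big\|.
\]

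It then remains only to estimate $\big\|\,|B|+|B^*|\,\big\|$. Here I would invoke the inequality of Kittaneh \cite{Kittaneh_JFA_1997} quoted in Remark~\ref{rem2}, namely $\big\|\,|B|+|B^*|\,\big\|\le \|B\|+\big\|\,|B|^{1/2}|B^*|^{1/2}\,\big\|$, together with the identity $\big\|\,|B|^{1/2}|B^*|^{1/2}\,\big\|=r^{1/2}(|B||B^*|)$ also noted there (this identity is just $\|X\|^{2}=\|X^*X\|=r(X^*X)$ applied to $X=|B|^{1/2}|B^*|^{1/2}$, combined with the cyclic invariance $r(ST)=r(TS)$ of the spectral radius). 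Substituting
\[
\big\|\,|B|+|B^*|\,\big\|\;\le\;\|B\|+r^{1/2}(|B||B^*|)
\]
into the previous display yields precisely $w(BC)\le\tfrac12\,r(C)\big(\|B\|+r^{1/2}(|B||B^*|)\big)$, which is the assertion.

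Along this route there is essentially no obstacle: the entire content lies in the choice $f=g=\lambda^{1/2}$, which makes Lemma~\ref{lem5} produce the symmetric combination $|B|+|B^*|$, after which a standard norm bound finishes the argument. If one instead wanted a proof independent of Lemma~\ref{lem5}, the real difficulty would be to generate the sharp factor $r(C)$ rather than the weaker $\|C\|$ or $w(C)$; this requires exploiting the hypothesis $|B|C=C^*|B|$ to pass to powers of $BC$ and then applying the spectral-radius limit $r(\cdot)=\lim_n\|(\cdot)^n\|^{1/n}$ in tandem with a McCarthy-type power inequality (Lemma~\ref{lem1}). That spectral bootstrap is exactly the machinery already packaged inside Lemma~\ref{lem5}, which is why routing through it is by far the most economical path.
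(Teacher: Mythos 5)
Your derivation is correct: Lemma~\ref{lem5} with $p=1$ and $f(\lambda)=g(\lambda)=\lambda^{1/2}$ does give $w(BC)\le\tfrac12 r(C)\bigl\||B|+|B^*|\bigr\|$, and combining this with Kittaneh's estimate $\bigl\||B|+|B^*|\bigr\|\le\|B\|+\bigl\||B|^{1/2}|B^*|^{1/2}\bigr\|$ and the identity $\bigl\||B|^{1/2}|B^*|^{1/2}\bigr\|^{2}=\bigl\||B^*|^{1/2}|B||B^*|^{1/2}\bigr\|=r(|B||B^*|)$ (positivity plus cyclic invariance of the spectral radius) yields exactly the stated bound. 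Note, however, that the paper itself offers no proof to compare against: Lemma~\ref{lem7} is imported verbatim from \cite[Corollary 2.13]{Bhunia_AM_2021}, where it is established by different machinery. So what you have produced is not a reconstruction of the paper's argument but an independent derivation showing that this cited lemma is in fact a consequence of two other results already quoted in the paper, namely Lemma~\ref{lem5} (from \cite{Bhunia_LAMA_2022}) and the norm inequality of \cite[Corollary 2]{Kittaneh_JFA_1997} used in Remark~\ref{rem2}. That makes the paper's toolkit more self-contained than its citation structure suggests, which is a genuine (if modest) gain. Two small caveats: your argument derives a 2021 result from a 2022 one, so it is a logical rather than historical reduction, and its legitimacy rests on the proof of Lemma~\ref{lem5} in \cite{Bhunia_LAMA_2022} not itself invoking \cite[Corollary 2.13]{Bhunia_AM_2021} --- within the present paper, where both are simply quoted, no circularity arises.
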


Now, we prove the following theorem.

\begin{theorem}\label{th6}
	If $A\in \mathcal{B}(\mathcal{H})$, then
	$$ w(A) \leq \frac12 \|A\|+ \frac12 \|A\|^t r^{1/2}\left(|A|^{1-t} |A^*|^{1-t} \right),$$
	for all $t\in [0,1]$. In particular, for $t=\frac12$
	\begin{eqnarray}\label{par6}
		 w(A) \leq \frac12 \|A\|+ \frac12 \|A\|^{1/2} r^{1/2}\left(|A|^{1/2} |A^*|^{1/2} \right).
	\end{eqnarray}
\end{theorem}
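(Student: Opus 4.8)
The plan is to follow the same substitution scheme used throughout this section and reduce the claim directly to Lemma \ref{lem7}. Writing the polar decomposition $A=U|A|$, I would set $B=U|A|^{1-t}$ and $C=|A|^t$, so that $BC=U|A|^{1-t}|A|^{t}=U|A|=A$ and hence $w(BC)=w(A)$. The first thing to verify is the hypothesis $|B|C=C^*|B|$ of Lemma \ref{lem7}. Since $U^*U$ is the orthogonal projection onto the closure of the range of $|A|$, it fixes $|A|^{1-t}$, so $B^*B=|A|^{1-t}U^*U|A|^{1-t}=|A|^{2(1-t)}$ and therefore $|B|=|A|^{1-t}$. As $C=|A|^t$ is positive, $C^*=C$, and both $|B|C$ and $C^*|B|$ equal $|A|^{1-t}|A|^t=|A|$; the commutation hypothesis thus holds.

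Next I would identify the three quantities appearing in Lemma \ref{lem7}. Because $C=|A|^t$ is positive, $r(C)=\|C\|=\|A\|^t$. Because $U$ is isometric on its initial space, which contains the range of $|A|^{1-t}$, we get $\|B\|=\||A|^{1-t}\|=\|A\|^{1-t}$. The remaining term needs $|B^*|$. From $BB^*=U|A|^{2(1-t)}U^*$ together with the standard identity $|A^*|^{2s}=U|A|^{2s}U^*$ (which follows from $|A^*|^2=AA^*=U|A|^2U^*$ by transporting the functional calculus across $U$), one obtains $BB^*=|A^*|^{2(1-t)}$, hence $|B^*|=|A^*|^{1-t}$. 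Consequently $|B||B^*|=|A|^{1-t}|A^*|^{1-t}$.

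Substituting these into Lemma \ref{lem7} gives
$$w(A)=w(BC)\leq \frac12 r(C)\left(\|B\|+r^{1/2}(|B||B^*|)\right)=\frac12\|A\|^t\left(\|A\|^{1-t}+r^{1/2}(|A|^{1-t}|A^*|^{1-t})\right),$$
and since $\|A\|^t\|A\|^{1-t}=\|A\|$, this is precisely the asserted bound; the particular case $t=\tfrac12$ is then immediate. The only genuinely delicate point is the computation of $|B^*|=|A^*|^{1-t}$, that is, justifying $|A^*|^{2(1-t)}=U|A|^{2(1-t)}U^*$. This rests on the fact that $U$ implements a unitary equivalence between $|A|^2$ (on the closure of the range of $|A|$) and $|A^*|^2$ (on the closure of the range of $A$), so that the functional calculus, applied to functions vanishing at $0$, transfers across $U$. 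Everything else is a routine consequence of positivity and the partial-isometry identities $U^*U|A|=|A|$ and $UU^*|A^*|=|A^*|$.
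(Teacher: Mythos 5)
Your proof is correct and follows exactly the paper's own argument: the paper's proof is precisely the substitution $B=U|A|^{1-t}$, $C=|A|^{t}$ into Lemma \ref{lem7} with the polar decomposition $A=U|A|$. You have merely made explicit the verifications the paper leaves implicit (the hypothesis $|B|C=C^*|B|$, and the identifications $|B|=|A|^{1-t}$, $|B^*|=|A^*|^{1-t}$, $r(C)=\|A\|^{t}$), all of which are carried out correctly.
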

\begin{proof}
Let $A=U|A|$ be the polar decomposition of $A$.	By putting $B=U|A|^{1-t}$ and $C=|A|^{t}$ in Lemma \ref{lem7}, we get the desired results.
\end{proof}

\begin{remark}
	Let $A\in \mathcal{B}(\mathcal{H}).$ 
	 For $0\leq t\leq 1$, we see that
	\begin{eqnarray*}
		r^{1/2}\left(|A|^{1-t} |A^*|^{1-t} \right) &\leq& w^{1/2}\left(|A|^{1-t} |A^*|^{1-t} \right)\\
		&\leq& \left\||A|^{1-t} |A^*|^{1-t} \right\|^{1/2}\\
		&\leq & \left\||A| |A^*|^{} \right\|^{(1-t)/2}
		= \left\|A^2 \right\|^{(1-t)/2}.
	\end{eqnarray*}
	Therefore, it follows from Theorem \ref{th6} that, for all $t\in [0,1]$,
	\begin{eqnarray*}
		w(A) & \leq & \frac12 \|A\|+ \frac12 \|A\|^t r^{1/2}\left(|A|^{1-t} |A^*|^{1-t} \right)\\
		& \leq & \frac12 \|A\|+ \frac12 \|A\|^t w^{1/2}\left(|A|^{1-t} |A^*|^{1-t} \right)\\
			& \leq & \frac12 \|A\|+ \frac12 \|A\|^t \left \| |A|^{1-t} |A^*|^{1-t} \right\|^{1/2}\\
		&\leq& \frac12 \|A\|+ \frac12 \|A\|^t \left\|A^2 \right\|^{(1-t)/2}.
	\end{eqnarray*}
	In particular, considering $t=0$, we get
	\begin{eqnarray*}
		w(A) &\leq&  \frac12 \|A\|+ \frac12  r^{1/2}\left(|A|^{} |A^*|^{} \right) \\ &\leq & \frac12 \|A\| +\frac12 \|A^2\|^{1/2},
	\end{eqnarray*}
which was also proved in \cite[Theorem 2.1 and Remark 2.2]{Bhunia_AM_2021} by using different approach.
\end{remark}

The next lemma that is needed for our purpose is as follows.

\begin{lemma}\label{lem8} \cite[Corollary 2.17]{Bhunia_BSM_2021}
	Let $B,C\in \mathcal{B}(\mathcal{H})$. Then
	$$ w^r(BC) \leq \frac1{2} w^2\left(|C|^r+i |B^*|^r \right) ,$$ for all $r\geq 2.$
\end{lemma}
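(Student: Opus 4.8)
The plan is to estimate $|\langle BCx,x\rangle|$ for a unit vector $x\in\mathcal{H}$ and then raise to the power $r$. First I would write $\langle BCx,x\rangle=\langle Cx,B^*x\rangle$ and apply Lemma \ref{lem2} to the positive operator matrix
$\begin{bmatrix} BB^* & BC \\ C^*B^* & C^*C\end{bmatrix}=\begin{bmatrix} B \\ C^*\end{bmatrix}\begin{bmatrix} B^* & C\end{bmatrix}$
(the same matrix used in the proof of Lemma \ref{lem4}). This yields
$|\langle BCx,x\rangle|^2\le \langle |B^*|^2x,x\rangle\,\langle |C|^2x,x\rangle$;
equivalently, this is just the Cauchy--Schwarz inequality $|\langle Cx,B^*x\rangle|\le \|Cx\|\,\|B^*x\|$ together with $\|Cx\|^2=\langle |C|^2x,x\rangle$ and $\|B^*x\|^2=\langle |B^*|^2x,x\rangle$.

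Next, since $r\ge 2$, raising this inequality to the power $r/2\ge 1$ gives $|\langle BCx,x\rangle|^r\le \langle |C|^2x,x\rangle^{r/2}\langle |B^*|^2x,x\rangle^{r/2}$. This is the one place where the hypothesis $r\ge 2$ is essential: because $r/2\ge 1$, McCarthy's inequality (Lemma \ref{lem1}) applies to the positive operators $|C|^2$ and $|B^*|^2$ with exponent $p=r/2$, giving $\langle |C|^2x,x\rangle^{r/2}\le \langle |C|^rx,x\rangle$ and $\langle |B^*|^2x,x\rangle^{r/2}\le \langle |B^*|^rx,x\rangle$. Combining these, I obtain $|\langle BCx,x\rangle|^r\le \langle |C|^rx,x\rangle\,\langle |B^*|^rx,x\rangle$.

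It then remains to convert this product into the numerical radius of $T:=|C|^r+i|B^*|^r$. By the AM--GM inequality the product is at most $\tfrac12\big(\langle |C|^rx,x\rangle^2+\langle |B^*|^rx,x\rangle^2\big)$. The key observation is that both $|C|^r$ and $|B^*|^r$ are positive, so $\langle |C|^rx,x\rangle$ and $\langle |B^*|^rx,x\rangle$ are real and nonnegative; hence they are precisely the real and imaginary parts of $\langle Tx,x\rangle$, and $\langle |C|^rx,x\rangle^2+\langle |B^*|^rx,x\rangle^2=|\langle Tx,x\rangle|^2\le w^2(T)$. Therefore $|\langle BCx,x\rangle|^r\le \tfrac12\,w^2(|C|^r+i|B^*|^r)$ for every unit vector $x$, and taking the supremum over $x$ (using that $t\mapsto t^r$ is increasing on $[0,\infty)$, so that $w^r(BC)=\sup_{\|x\|=1}|\langle BCx,x\rangle|^r$) completes the proof.

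I expect no serious obstacle, since the argument is a short chain of Cauchy--Schwarz (via Lemma \ref{lem2}), McCarthy's inequality, and AM--GM. The only points that demand care are tracking that $r\ge 2$ is exactly what makes $r/2\ge 1$, so that McCarthy is legitimate, and noting that the identity $\langle |C|^rx,x\rangle^2+\langle |B^*|^rx,x\rangle^2=|\langle Tx,x\rangle|^2$ rests on the positivity of $|C|^r$ and $|B^*|^r$, which forces their quadratic forms to be real and nonnegative.
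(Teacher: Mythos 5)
Your proof is correct: the chain of mixed Cauchy--Schwarz ($|\langle BCx,x\rangle|\le\|Cx\|\,\|B^*x\|$), McCarthy's inequality with exponent $r/2\ge 1$ (which is exactly where $r\ge 2$ enters), AM--GM, and the identity $a^2+b^2=|a+ib|^2$ for $a,b\ge 0$ establishes the lemma with no gaps. The paper itself only cites this result from \cite{Bhunia_BSM_2021} without reproducing a proof, but your argument is precisely the standard one behind that citation and uses the same toolkit (Lemma \ref{lem1} and the Cauchy--Schwarz-type estimate underlying Lemma \ref{lem2}) that the paper deploys in Proposition \ref{prop1}.
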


Using the above lemma we prove the following theorem.

\begin{theorem}\label{th7}
		If $A\in \mathcal{B}(\mathcal{H})$, then
	\begin{eqnarray*} 
		w^r(A) &\leq& \frac12 w^{2} \left(  |A|^{rt} +i|A^*|^{r(1-t)} \right) \\
		&\leq& \frac12  \left\|  |A|^{2rt} +|A^*|^{2r(1-t)} \right\|,
	\end{eqnarray*}
	for all $t\in [0,1]$ and for all $r\geq 2.$ In particular, for $r=2$
	\begin{eqnarray}\label{par7}
		w(A) &\leq& \frac1{\sqrt2} w^{} \left(  |A|^{2t} +i|A^*|^{2(1-t)} \right)\\
		&\leq& \frac1{\sqrt2} \left\|  |A|^{4t} +|A^*|^{4(1-t)} \right\|^{1/2}\notag.
	\end{eqnarray}
for all $t\in [0,1]$.
\end{theorem}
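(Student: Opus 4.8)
The plan is to reuse the factorization device from the previous theorems together with Lemma \ref{lem8}, which (unlike Lemmas \ref{lem5}--\ref{lem7}) imposes no commutativity hypothesis on its factors. Writing $A=U|A|$ for the polar decomposition, I would set $B=U|A|^{1-t}$ and $C=|A|^t$, so that
\[
BC=U|A|^{1-t}|A|^{t}=U|A|=A,
\]
and hence $w^r(BC)=w^r(A)$ for every $r\geq 2$.

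The next step is to compute the two positive operators that Lemma \ref{lem8} produces. Since $C=|A|^t\geq 0$ we have $|C|=|A|^t$ and thus $|C|^r=|A|^{rt}$. For the other factor, $BB^*=U|A|^{2(1-t)}U^*$, which equals $|A^*|^{2(1-t)}$ by the polar-decomposition relation $|A^*|^{\sigma}=U|A|^{\sigma}U^*$ (valid for $\sigma>0$); consequently $|B^*|=|A^*|^{1-t}$ and $|B^*|^r=|A^*|^{r(1-t)}$. Substituting these into Lemma \ref{lem8} would give
\[
w^r(A)\leq \frac12\,w^2\!\left(|A|^{rt}+i|A^*|^{r(1-t)}\right),
\]
which is exactly the first claimed inequality.

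To pass to the second inequality I would apply Kittaneh's bound \eqref{kit05} to $T=P+iQ$, where $P=|A|^{rt}$ and $Q=|A^*|^{r(1-t)}$ are positive, hence self-adjoint. Expanding, $|T|^2=T^*T=P^2+Q^2+i(PQ-QP)$ and $|T^*|^2=TT^*=P^2+Q^2-i(PQ-QP)$, so the commutator terms cancel and $|T|^2+|T^*|^2=2(P^2+Q^2)$. Therefore \eqref{kit05} collapses to
\[
w^2\!\left(|A|^{rt}+i|A^*|^{r(1-t)}\right)\leq \big\|\,|A|^{2rt}+|A^*|^{2r(1-t)}\big\|,
\]
and chaining the two displays yields $w^r(A)\leq\frac12\big\|\,|A|^{2rt}+|A^*|^{2r(1-t)}\big\|$. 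The particular cases $r=2$ then follow immediately by taking square roots in both inequalities.

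I expect the only delicate point to be the identity $|B^*|^r=|A^*|^{r(1-t)}$, which hinges on $|A^*|^{\sigma}=U|A|^{\sigma}U^*$. This is not a purely algebraic manipulation: it relies on $U$ being the partial isometry of the polar decomposition, whose initial space is $\overline{\operatorname{ran}|A|}$, so that $U^*U$ fixes $|A|$ and the functional calculus transfers through conjugation by $U$. Once this is granted, the remainder is a routine expansion together with two invocations of already-established results.
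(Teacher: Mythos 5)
Your proposal is correct and follows essentially the same route as the paper: the same substitution $B=U|A|^{1-t}$, $C=|A|^{t}$ into Lemma \ref{lem8}, with the identity $|B^*|^{r}=|A^*|^{r(1-t)}$ justified exactly as the polar-decomposition argument requires. The only difference is that you spell out the second inequality (via \eqref{kit05} applied to $P+iQ$, where the commutator terms cancel) and the transfer of functional calculus through $U$, both of which the paper leaves as ``follow easily.''
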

\begin{proof}
	Let $A=U|A|$ be the polar decomposition of $A$.	By considering $B=U|A|^{1-t}$ and $C=|A|^{t}$ in Lemma \ref{lem8}, we obtain the desired first inequality. The next inequalities follow easily.
\end{proof}

%\begin{remark} Let $A\in \mathcal{B}(\mathcal{H}).$ 
In particular, considering $t=\frac12 $ in the inequality \eqref{par7}, we get 
	\begin{eqnarray*}
		w^2(A) &\leq& \frac1{2} w^{2} \left(  |A|^{} +i|A^*|^{} \right)\\ &\leq & \frac1{2} \left\|  |A|^{2} +|A^*|^{2} \right\|,
	\end{eqnarray*}
	which was also proved in \cite[Corollary 2.15 and Remark 2.16]{Bhunia_BSM_2021} using different technique.
%\end{remark}
To prove our final result we need the following lemma.
\begin{lemma}\label{lem9}\cite[Corollary 2.11]{Bhunia_GMJ_2023}
	Let $B,C\in \mathcal{B}(\mathcal{H})$ be such that $|B|C=C^*|B|$. Then
	$$ w(BC) \leq \frac1{\sqrt{2}}r(C)w\left( |B|+i|B^*|\right) .$$
\end{lemma}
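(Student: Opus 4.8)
The plan is to funnel the whole estimate through the positivity criterion of Lemma \ref{lem2} together with the self-adjointness that the hypothesis secretly encodes, so that only elementary inequalities remain at the end. Write the polar decomposition $B=U|B|$ and recall the identity $|B^*|=U|B|U^*$. The first observation is that $|B|C=C^*|B|$ says precisely that $D:=|B|C$ is self-adjoint, since $D^*=C^*|B|=|B|C=D$, and moreover $BC=U|B|C=UD$. Hence it suffices to control $w(UD)$ for the self-adjoint operator $D$, and the entire problem is to relate $\langle UDx,x\rangle=\langle Dx,U^*x\rangle$ to the positive operators $|B|$ and $|B^*|$.

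The crucial input, and the step I expect to be the main obstacle, is the operator inequality
$$-\,r(C)\,|B|\ \le\ |B|C\ \le\ r(C)\,|B|,$$
equivalently $|\langle Dx,x\rangle|\le r(C)\langle|B|x,x\rangle$ for every $x$; this is where the spectral radius of $C$, rather than its norm, must appear. The hypothesis makes $C$ symmetric for the semi-inner product $\langle u,v\rangle_{|B|}=\langle|B|u,v\rangle$, so Cauchy--Schwarz in this form gives $|\langle Cx,x\rangle_{|B|}|^2\le\langle C^2x,x\rangle_{|B|}\langle x,x\rangle_{|B|}$, and iterating (a McCarthy-type power inequality, cf. Lemma \ref{lem1}) yields $|\langle Cx,x\rangle_{|B|}|^{2^k}\le\langle C^{2^k}x,x\rangle_{|B|}$ on the $|B|$-unit sphere; letting $k\to\infty$ identifies the sup with the $|B|$-spectral radius $r_{|B|}(C)$. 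The delicate point is the domination $r_{|B|}(C)\le r(C)$: because $C$ is self-adjoint only with respect to a possibly degenerate form, the ordinary norm of $x$ is unbounded on the $|B|$-unit sphere, so naive estimates fail, and one genuinely needs the semi-Hilbertian fact that the $|B|$-spectrum is controlled by the ordinary spectrum. I would establish this via the resolvent: for $\lambda>r(C)$ the series $(\lambda I-C)^{-1}=\lambda^{-1}\sum_{n\ge0}(C/\lambda)^n$ converges, and the relations $|B|C^n=(C^n)^*|B|$ (an immediate induction) force $|B|(\lambda I-C)$ to be self-adjoint and positive, whence $|B|C\le r(C)|B|$; replacing $C$ by $-C$ gives the lower bound.

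Granting this, set $M=r(C)|B|\ge 0$. Since $-M\le D\le M$ with $D$ self-adjoint, the block operator $\bigl[\begin{smallmatrix}M&D\\ D&M\end{smallmatrix}\bigr]$ is positive, as it is unitarily equivalent via $\tfrac1{\sqrt2}\bigl[\begin{smallmatrix}I&I\\ I&-I\end{smallmatrix}\bigr]$ to $\mathrm{diag}(M+D,M-D)$. Lemma \ref{lem2} then gives $|\langle Dx,y\rangle|^2\le\langle Mx,x\rangle\langle My,y\rangle$ for all $x,y$. Taking $y=U^*x$, using $\langle|B|U^*x,U^*x\rangle=\langle U|B|U^*x,x\rangle=\langle|B^*|x,x\rangle$ and $\langle Dx,U^*x\rangle=\langle UDx,x\rangle=\langle BCx,x\rangle$, I obtain for every unit vector $x$
$$|\langle BCx,x\rangle|^2\ \le\ r^2(C)\,\langle|B|x,x\rangle\,\langle|B^*|x,x\rangle.$$

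Finally I convert the product on the right into the numerical radius of $|B|+i|B^*|$. As $|B|$ and $|B^*|$ are self-adjoint, $\langle(|B|+i|B^*|)x,x\rangle=\langle|B|x,x\rangle+i\langle|B^*|x,x\rangle$, so $\langle|B|x,x\rangle^2+\langle|B^*|x,x\rangle^2=|\langle(|B|+i|B^*|)x,x\rangle|^2\le w^2(|B|+i|B^*|)$. Combining with the arithmetic--geometric mean bound $\langle|B|x,x\rangle\langle|B^*|x,x\rangle\le\tfrac12\bigl(\langle|B|x,x\rangle^2+\langle|B^*|x,x\rangle^2\bigr)$ gives $|\langle BCx,x\rangle|^2\le\tfrac12 r^2(C)\,w^2(|B|+i|B^*|)$; taking the supremum over unit $x$ and a square root produces the asserted bound $w(BC)\le\tfrac1{\sqrt2}\,r(C)\,w(|B|+i|B^*|)$.
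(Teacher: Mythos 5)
Your overall architecture is sound and the endgame is fully correct: $D=|B|C$ is self-adjoint, $BC=UD$, the block matrix $\begin{bmatrix} M & D\\ D & M\end{bmatrix}$ with $M=r(C)|B|$ is positive by conjugation with $\frac{1}{\sqrt 2}\begin{bmatrix} I & I\\ I & -I\end{bmatrix}$, Lemma \ref{lem2} with $y=U^*x$ and $|B^*|=U|B|U^*$ gives $|\langle BCx,x\rangle|^2\le r^2(C)\langle |B|x,x\rangle\langle |B^*|x,x\rangle$, and AM--GM plus $a^2+b^2=|a+ib|^2$ finishes. The one genuine gap is in your justification of the crux inequality $-r(C)|B|\le |B|C\le r(C)|B|$. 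The resolvent argument is a non sequitur as written: for $\lambda>r(C)$, self-adjointness of $|B|(\lambda I-C)$ is indeed forced by $|B|C^n=(C^*)^n|B|$, but its \emph{positivity} is exactly equivalent to the inequality you are trying to prove, and convergence of the Neumann series does not deliver it. Writing $|B|(\lambda I-C)=\big[(\lambda I-C^*)|B|(\lambda I-C)\big](\lambda I-C)^{-1}$ exhibits it as (positive)$\times$(invertible), but a self-adjoint operator of that form need not be positive: with $S=\begin{bmatrix}1&0\\0&0\end{bmatrix}$ and $R=\begin{bmatrix}-1&0\\0&1\end{bmatrix}$, the product $SR=\begin{bmatrix}-1&0\\0&0\end{bmatrix}$ is self-adjoint and not positive. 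So the spectral radius cannot be smuggled in through invertibility alone; some iteration/Gelfand mechanism is unavoidable here (even the weaker bound with $\|C\|$ in place of $r(C)$ fails to follow from a single Cauchy--Schwarz, because $\langle |B|Cx,Cx\rangle$ is not dominated by $\|C\|^2\langle |B|x,x\rangle$).

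Fortunately, the iteration you yourself sketch already closes the gap, and the ``delicate semi-Hilbertian domination $r_{|B|}(C)\le r(C)$'' for which you deploy the resolvent is unnecessary. From the semi-inner-product Cauchy--Schwarz and $(C^m)^*|B|=|B|C^m$ one gets $|\langle |B|Cx,x\rangle|^{2^k}\le \langle |B|C^{2^k}x,x\rangle\,\langle |B|x,x\rangle^{2^k-1}$; now bound the middle factor by the \emph{ordinary} norm, $|\langle |B|C^{2^k}x,x\rangle|\le \||B|\|\,\|C^{2^k}\|\,\|x\|^2$, take $2^k$-th roots, and let $k\to\infty$: for each fixed $x$ the factor $\left(\||B|\|\,\|x\|^2\right)^{2^{-k}}\to 1$ and $\|C^{2^k}\|^{2^{-k}}\to r(C)$ by Gelfand's formula, yielding $|\langle |B|Cx,x\rangle|\le r(C)\langle |B|x,x\rangle$ (the degenerate case $\langle |B|x,x\rangle=0$ is settled by the first Cauchy--Schwarz step). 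Your worry that the ordinary norm is unbounded on the $|B|$-unit sphere is a red herring: the estimate is pointwise in $x$, and no supremum is taken before the limit. With that one-line repair your proof is complete. For comparison: the paper itself gives no proof of this lemma, quoting it from \cite[Corollary 2.11]{Bhunia_GMJ_2023}; in that line of work the standard engine is the pointwise inequality $|\langle BCx,y\rangle|\le r(C)\,\||B|^{1/2}x\|\,\||B^*|^{1/2}y\|$ (an Abu-Omar--Kittaneh-type lemma, cf.\ \cite{Abu_STU_2013}), after which the AM--GM and complex-combination steps are as in your last paragraph. Your block-matrix derivation of that pointwise bound from the operator inequality $-r(C)|B|\le |B|C\le r(C)|B|$ is a self-contained alternative to citing that lemma.
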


\begin{theorem}\label{th8}
	If $A\in \mathcal{B}(\mathcal{H})$, then
	\begin{eqnarray*} 
		w(A) &\leq& \frac1{\sqrt{2}} \|A\|^t w \left(  |A|^{1-t} +i|A^*|^{1-t} \right) \\
		&\leq&  \|A\|^t  \left\|  \frac{|A|^{2(1-t)} +|A^*|^{2(1-t)}}2 \right\|^{1/2},
	\end{eqnarray*}
	for all $t\in [0,1]$. In particular, for $t=\frac12$
	\begin{eqnarray}\label{par8}
		w(A) &\leq& \frac1{\sqrt{2}} \|A\|^{1/2} w \left(  |A|^{1/2} +i|A^*|^{1/2} \right) \\
		&\leq&  \|A\|^{1/2}  \left\|  \frac{|A|^{} +|A^*|^{}}{2} \right\|^{1/2}\notag.
	\end{eqnarray}
	
\end{theorem}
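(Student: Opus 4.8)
The plan is to apply Lemma~\ref{lem9} to the factorization furnished by the polar decomposition, exactly in the spirit of the preceding theorems, and then to extract the second inequality from McCarthy's inequality (Lemma~\ref{lem1}).

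First I would write $A=U|A|$ and set $B=U|A|^{1-t}$ and $C=|A|^{t}$, so that $BC=U|A|^{1-t}|A|^{t}=U|A|=A$. Before invoking Lemma~\ref{lem9}, I must identify $|B|$, $|B^*|$ and $r(C)$ and verify the hypothesis $|B|C=C^*|B|$. Using that $U$ is the partial isometry whose initial projection $U^*U$ is the projection onto $\overline{\mathrm{ran}\,|A|}$, one has $U^*U|A|^{s}=|A|^{s}$ for every $s>0$; hence $B^*B=|A|^{1-t}U^*U|A|^{1-t}=|A|^{2(1-t)}$, giving $|B|=|A|^{1-t}$. Likewise, from the standard identity $|A^*|^{s}=U|A|^{s}U^*$ one gets $BB^*=U|A|^{2(1-t)}U^*=|A^*|^{2(1-t)}$, so $|B^*|=|A^*|^{1-t}$. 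The hypothesis is then immediate, since $C=|A|^{t}$ is self-adjoint: $|B|C=|A|^{1-t}|A|^{t}=|A|=|A|^{t}|A|^{1-t}=C^*|B|$.

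With these identifications Lemma~\ref{lem9} yields $w(A)=w(BC)\leq \frac{1}{\sqrt2}\,r(|A|^{t})\,w\!\left(|A|^{1-t}+i|A^*|^{1-t}\right)$, and since $|A|^{t}$ is positive we have $r(|A|^{t})=\||A|^{t}\|=\|A\|^{t}$, which is precisely the first inequality. For the second inequality it suffices to bound $w\!\left(|A|^{1-t}+i|A^*|^{1-t}\right)$. Writing $P=|A|^{1-t}$ and $Q=|A^*|^{1-t}$, both positive, the Cartesian form of the numerical radius gives $w^2(P+iQ)=\sup_{\|x\|=1}\left(\langle Px,x\rangle^2+\langle Qx,x\rangle^2\right)$; applying Lemma~\ref{lem1} to each term bounds this by $\sup_{\|x\|=1}\langle (P^2+Q^2)x,x\rangle=\|P^2+Q^2\|=\||A|^{2(1-t)}+|A^*|^{2(1-t)}\|$. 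Combining the two steps and taking square roots gives the displayed chain, and setting $t=\tfrac12$ recovers \eqref{par8}.

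The only delicate point is the bookkeeping with $U$: since $U$ is merely a partial isometry rather than a unitary, the identities $U^*U|A|^{s}=|A|^{s}$ and $|A^*|^{s}=U|A|^{s}U^*$ must be justified from the support properties of $U$ (namely that $U^*U$ fixes $\overline{\mathrm{ran}\,|A|}$ and that $U$ carries the spectral projections of $|A|$ onto those of $|A^*|$) rather than taken for granted. Once these are in hand, the theorem is a direct substitution into Lemma~\ref{lem9} followed by the termwise McCarthy estimate, so I expect no further obstacle.
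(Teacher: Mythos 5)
Your proposal is correct and takes essentially the same approach as the paper, which likewise sets $B=U|A|^{1-t}$, $C=|A|^{t}$ in Lemma~\ref{lem9} and notes that the remaining inequalities follow easily. Your careful identification of $|B|=|A|^{1-t}$, $|B^*|=|A^*|^{1-t}$, the commutation hypothesis, and the McCarthy-based bound $w^2\bigl(|A|^{1-t}+i|A^*|^{1-t}\bigr)\leq \bigl\||A|^{2(1-t)}+|A^*|^{2(1-t)}\bigr\|$ simply supplies the details the paper leaves implicit.
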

\begin{proof}
	Let $A=U|A|$ be the polar decomposition of $A$.	By considering $B=U|A|^{1-t}$ and $C=|A|^{t}$ in Lemma \ref{lem9}, we obtain the desired first inequality. The next inequalities follow easily.
\end{proof}

\begin{remark}
We would like to remark that the bound \eqref{par8} is sharper than the bound  
\begin{eqnarray}\label{bound}
	w(A)\leq  \|A\|^{1/2}  \left\|  \frac{|A|^{} +|A^*|^{}}{2} \right\|^{1/2}.
\end{eqnarray}
The bound \eqref{bound} follows from the bounds $w(A)\leq \|A\|$ (see in \eqref{eqv1}) and  $w(A)\leq \left\|  \frac{|A|^{} +|A^*|^{}}{2} \right\|$ (see in \cite{Kittaneh_2003}).
\end{remark}

%\noindent \bf{Declarations.}\\
%\noindent {\bf{Conflict of Interest.}} The author declare that there is no conflict of interest.

\bibliographystyle{amsplain}

\end{document}